\newtheorem{theorem}{Theorem}
\newtheorem{proposition}[theorem]{Proposition}
\newtheorem{assumption}{Assumption}
\newtheorem{lemma}{Lemma}
\newcommand{\be}{\begin{equation}}
\newcommand{\ee}{\end{equation}}
\newcommand{\bee}{\begin{equation*}}
\newcommand{\eee}{\end{equation*}}
\newcommand{\beaa}{\begin{eqnarray*}}
\newcommand{\eeaa}{\end{eqnarray*}}
\newcommand{\Prob}{\mathbb{P}}
\newcommand{\Rn}{\mathbb{R}}
\newcommand{\samples}{\mathcal{S}}
\newcommand{\ovec}{\operatorname{vec}}
\newcommand{\iprod}[2]{\langle #1, #2 \rangle}
\newcommand{\te}[1]{\theta^{#1}}
\newcommand{\Expe}{\mathbb{E}}
\newcommand{\cF}{\mathcal{F}}
\newcommand{\bE}{\mathbb{E}}
\newcommand{\mbR}{\mathbb{R}}
\newcommand{\pp}[1]{\left(#1\right)}
\newcommand{\mfF}{\mathbf{F}}
\newcommand{\mbE}{\mathbb{E}}
\newenvironment{proof}{{\noindent\it Proof.}}{\hfill $\square$\par}
\newenvironment{theorem2}{{\\\noindent\it \rm {\textbf{Theorem.}}}}{\hfill \par}
\begin{document}

%%%%%%%%% TITLE
\title{ Enhance Curvature Information by Structured Stochastic Quasi-Newton Methods}
%\author{{Minghan Yang, Dong Xu, Hongyu Chen, Zaiwen Wen} \\
%
%%Beijing International Center for Mathematical Research\\ 
%Peking University \\ {\{yangminghan, taroxd, hongyuchen, wenzw\}@pku.edu.cn} \and Mengyun Chen \\Huawei Technologies Co. Ltd\\ chenmengyun1@huawei.com}

\author{%
    Minghan Yang\textsuperscript{1,2}, ~~%
    Dong Xu\textsuperscript{1,2}, ~~%
    Hongyu Chen\textsuperscript{1}, ~~
    Zaiwen Wen\textsuperscript{2,3,4}, ~~
    Mengyun Chen\textsuperscript{5}
    \\
    % Institutes
     {\textsuperscript{1} School of Mathematical Sciences, Peking University, China
} ~\\
    {\textsuperscript{2} Beijing International Center for Mathematical Research, Peking University, China} ~\\
 {\textsuperscript{3} Center for Data Science, Peking University, China} ~\\
 {\textsuperscript{4} National Engineering Laboratory for Big Data Analysis and Applications, Peking University, China} ~\\
 {\textsuperscript{5} Huawei Technologies Co. Ltd, China
} ~
  \\
 % Emails
 {\tt\small \{yangminghan, taroxd, hongyuchen, wenzw\}@pku.edu.cn ,chenmengyun1@huawei.com} \\
}
% For a paper whose authors are all at the same institution,
% omit the following lines up until the closing ``}''.
% Additional authors and addresses can be added with ``\and'',
% just like the second author.
% To save space, use either the email address or home page, not both
%\and
%Second Author\\
%Institution2\\
%First line of institution2 address\\
%{\tt\small secondauthor@i2.org}
%}

\maketitle

%\pagestyle{empty}  % no page number for the second and the later pages
%\thispagestyle{empty} % no page number for the first page
%%%%%%%%% ABSTRACT
\begin{abstract}
In this paper, we consider stochastic second-order methods for minimizing a  finite summation of nonconvex functions. One important key  is to find an ingenious but cheap scheme to incorporate local curvature information.
Since the true Hessian matrix is often a combination of a cheap part and an expensive part, we propose a structured stochastic quasi-Newton method by using partial Hessian information as much as possible. By further exploiting either the low-rank structure or the kronecker-product properties of the quasi-Newton approximations, the computation of the quasi-Newton direction is affordable. Global convergence to stationary point and local superlinear convergence rate are established under some mild assumptions. Numerical results on logistic regression, deep autoencoder networks and deep convolutional neural networks show that our proposed method is quite competitive to the state-of-the-art methods.
\end{abstract}
%%%%%%%%% BODY TEXT
\section{Introduction}
%Consider a training dataset  $\{(x_i,y_i)\}_{i=1}^N$, where $x_i$ and $y_i$ are given data and labels, respectively.
%The mathematical models from supervised learning can often be written as a
Consider the large-scale finite-sum optimization problem:
\be
\label{finite-sum}
 \vspace{-0.5ex}\min_{\theta \in \Rn^n}\Psi(\theta) = \frac{1}{N} \sum_{i=1}^N \psi_i(\theta) , \vspace{-0.5ex}\ee

where $\theta$ is the decision variable, $\psi_i$ is the component function  and $N$ is the number of functions. For many cases, $\psi_i$ is related to the data point $(x_i,y_i)$, i.e., 
\be
\label{finite-sum1}
 \vspace{-0.5ex}\psi_i(\theta) = \ell \left ( f(x_i, \theta),y_i \right ), \vspace{-0.5ex}
\ee
where $f(x_i, \theta)\in \Rn^m$ is the output of $x_i$ through certain model and $\ell$ is the loss function to measure the prediction error. 
%between the prediction $f(x_i,\theta)$ and the true label $y_i$. 
The numbers $n$ and $N$ can be very huge. For example, the number of parameters $n$ is 175 billion in GPT-3 \cite{brown2020language}. Problem  (\ref{finite-sum}) widely arises
in many applications such as deep learning
\cite{Goodfellow-et-al-2016, lecun2015deep,Sch15} and
statistical learning \cite{friedman2001elements,vapnik2013nature}.

The first-order type methods are dominant approaches for the problem \eqref{finite-sum}.
The classical stochastic gradient descent method
(SGD) \cite{RobMon51} falls into this type and 
extensions of SGD have been widely studied in \cite{All17-Kat,JohZha13,nguyen2017sarah}
for better practical performance  and
theoretical properties. 

Recently, stochastic second-order methods have gained increasing attention. Since the computation and inversion of the Hessian matrix in the large-scale
applications is costly, various strategies to approximate the Hessian have been
developed in \cite{byrd2011use,milzarek2019stochastic,pilanci2017newton,RooMah19,XuRooMah19,yangSEQN}. 
%Particularly, stochastic quasi-Newton methods including the limited-memory BFGS
%(L-BFGS) are used to construct tractable approximations \cite{schraudolph2007stochastic,gower2016stochastic,ByrHanNocSin16,BerNocTak16,WanMaGolLiu17,yangSEQN}.
%In deep
%learning, stochastic first-order approaches
%\cite{duchi2011adaptive,sutskever2013importance,kingma2014adam,goyal2017accurate,you2018imagenet}
%have been the first choice because of good generalization ability and the ease
%of implementation, etc.
%Special extensions of second-order methods to deep learning problem are also considered 
In deep learning, stochastic second-order methods are expected to address the
scalability issue with large batch size.  The Hessian-free method  \cite{HF-DL}
uses the conjugate-gradient (CG) method to obtain a descent direction by
utilizing Hessian-vector products.  
%The curvature information is estimated and added to SGD at each step in \cite{CURVEBALL}. 
The Gauss-Newton matrix
is investigated to approximate the Hessian matrix in \cite{gauss-Newton} and a practical
block-diagonal approximation to the Gauss-Newton matrix is studied in
\cite{practicalGN}. %Another suitable approximation to the Hessian matrix is the Fisher matrix.
The so-called KFAC method developed in \cite{grosse2016kronecker}
utilizes the kronecker-factored approximation to Fisher information matrix (FIM). Its efficiency has been demonstrated in large-scale distributed parallel computing \cite{35epochsKFAC}. A Newton method for convolutional neural networks (CNN) is investigated in \cite{wang2020newton}.

 In this paper, we consider a structured quasi-Newton (QN) framework to enhance curvature information. This idea has been studied in a few second-order methods.
%For example, the  Gauss-Newton method for nonlinear least squares problems
%ignores the complicate part of the Hessian matrix and shows good performance on
%small residual problems. 
For example, on the nonlinear least squares problems with large residual, approaches that
compensate the Gauss-Newton matrix by a quasi-Newton approximation to the
complicate part of the Hessian matrix are often much better
\cite{nocedal2006numerical, sun2006optimization, zhou2010global}. This concept has been further verified
in optimization problems with orthogonality constraints in
\cite{hu2019structured}. In this paper, we develop the structured quasi-Newton method in the stochastic
 setting for problem \eqref{finite-sum}.

 %\subsection{Organization}
%The rest of the paper is organized as follows. The structure of Hessian for large-scale finite-sum optimization problem is stated in Section \ref{sec:stru-Hessian}.
%The stochastic structured quasi-Newton framework is developed in Section \ref{sec:sto-qn} and in section \ref{S2QN-DL} the special extensions to deep learning algorithms are stated.
%The theoretical properties are analyzed in Section \ref{sec:theo}.
%Numerical comparisons in Section \ref{sec:num} on the logistic regression, deep autoencoder networks and
%deep learning problems show that the performance of our proposed method is at
%least comparable with the state-of-the-art methods.

\subsection{Contribution}
%\begin{itemize}%\itemsep=0.01cm
%\item
Our main contributions are as follows.

    (1) The structures where the Hessian matrix is a summation of a
     cheap part and an expensive 
    part are exploited for machine learning
    problems. The concept of structured quasi-Newton is extended to stochastic setting. 
%    Naturally, our strategy is to keep the cheap part but design a good
%    approximation for the expensive part. 
%    When only a few samples are allowed, the
%    Nystr\"om approximation is helpful to construct a low-rank surrogate with
%    certain theoretical guarantees.

    %\item
(2) A general structured stochastic  quasi-Newton framework is proposed for
    the large-scale finite-sum problem \eqref{finite-sum}. We formulate stochastic secant conditions based on the partial Hessian matrix, then various quasi-Newton matrices can be constructed. By further exploiting either the low-rank structure or the kronecker-product properties of the quasi-Newton approximations, the computation of the refined direction is affordable. 
%     Then, various quasi-Newton matrices can be computed.
% They usually provide better performance than these constructed
% by the vanilla quasi-Newton methods. 
% We also develop an extra-step strategy to reuse the stochastic gradient
% estimated in the construction of the quasi-Newton matrix.

 %\item
    (3)   Global
    convergence is established if the step sizes are chosen properly and the
    stochastic errors satisfy certain summability conditions. A local
    superlinear convergence rate is also guaranteed for the structured stochastic quasi-Newton method if the sample size is sufficiently large.
 %\end{itemize}
%-------------------------------------------------------------------------
\section{Structures of the Hessian Matrices} \label{sec:stru-Hessian}
In this part, we assume that the Hessian matrix $\nabla^2 \Psi(\theta)$ can be divided into two
different parts:
\be \label{HStru} \nabla^2 \Psi(\theta) = H(\theta) + \Pi(\theta), \ee
where the part $H(\theta)$ is relatively cheap and accessible while the other part
$\Pi(\theta)$ is expensive or even not computable.

 Specifically, there are a few possible situations. \textbf{(1) The dimension $n$ is so high} that an explicit storage of $\nabla^2 \Psi(\theta)$ is
prohibitive. Hence, it is favorable to utilize an low-rank approximation to the Hessian matrix \cite{roux2008topmoumoute} or just use the Hessian information implicitly \cite{HF-DL}. \textbf{(2) The number of data points $N$ is tremendous.} Even if each component $\nabla^2 \psi_i(\theta)$ is cheap,  assembling all parts becomes a non-negligible task when the size $N$ is huge. \textbf{(3) The derivatives are complicated.} In certain cases, the explicit expression of the Hessian can not be derived or computed easily. 

In many applications, the reasons listed above are mixed.
%For example, even in a toy deep learning model, the structure is so complex and the computation is time-consuming. 
% \end{itemize}
% In many applications, the reasons listed above may coexist.
% It is quite common that extensive evaluations of
% the Hessian information are not
% feasible but executing a few of them carefully can be beneficial for the acceleration of the
% algorithms. 
We next explain a few typical scenarios of \eqref{HStru}. The goal is to explore $\Pi(\theta)$ for better performance at a relatively low computational cost.

\subsection{Hessian Matrices for General Problem \eqref{finite-sum}}
The subsampling procedure only selects a small fraction of the data in certain ways for
the update.  Given a subset $ \mathcal{S}_{H} \subseteq
\{1,2,\dots,N\}$, the subsampled Hessian matrix is defined as
 $\nabla^2_{\mathcal{S}_H}\Psi(\theta) :=\frac{1}{|\mathcal{S}_H|}
\sum_{i\in\mathcal{S}_H} \nabla^2 \psi_i(\theta) .$ It is common to choose the subset $ \mathcal{S}_{H}$  by uniform random sampling. 
%The subsampled function value is defined as $\Psi_{\samples} =  \frac{1}{|\mathcal{S}|} \sum_{i\in\mathcal{S}}  \psi_i(\theta)$ similarly.
% randomly
%with or without replacement. It can also be updated in a cyclical fashion or
%other suitable deterministic way. 
%The size $|\mathcal{S}_{H}|$ of the
%subset is an important factor for the behavior of the algorithms.
In this case, if $|\mathcal{S}_{H}|$ is small, let
\be
\label{sub-Hessian}
H(\theta) = \nabla^2_{\mathcal{S}_H}\Psi(\theta),
\ee and we have $\Pi(\theta)= \nabla^2
\Psi(\theta)-\nabla^2_{\mathcal{S}_H}\Psi(\theta)$ correspondingly.

%incremental and/or
%It is common to use the subsampled Hessian matrix as the curvature matrix in second-order-type methods. Within the limit of the computing resource, the Hessian matrix can be decomposed into two parts: one part is a gather of a small number of component Hessian matrix, and the other part is the difference to the Hessian matrix and inaccessible

\subsection{Hessian Matrices for Format \eqref{finite-sum1}}
%In this part, we consider the structure of the Hessian matrices for deep learning problems.
% Denote output of the network by
% $f(x_i,
% \theta)\in \Rn^m$ for the input  $x_i$. Let
% $\psi_{i}(\theta):=\ell_i(\theta) := \ell(f(x_i,\theta),y_i)$ be the loss function to measure the error between
%the prediction $f(x_i,\theta)$ and the true label $y_i$.  
By using the chain rule twice, the Hessian matrix of the case \eqref{finite-sum1} can
be split as:
%\begin{equation}
%\label{fnn-hessian}
\begin{align}
    H(\theta)&=\frac{1}{N} \sum_{i=1}^N H_i(\theta) = \frac{1}{N} \sum_{i=1}^N J_f^i(\theta)  \nabla_f^2 \ell_i(\theta)
(J_f^i(\theta))^\top, \label{GGN-mtx}\\
\Pi(\theta)&=\frac{1}{N} \sum_{i=1}^N \Pi_i(\theta)  = \frac{1}{N} \sum_{i=1}^N\sum_{j=1}^m \nabla_{f_j}
\ell_i(\theta)\nabla_\theta^2f_j^i(\theta), \label{GGN2-mtx}
\end{align}
where $J_f^i(\theta)=\nabla_\theta f(x_i,\theta)\in \mbR^{n\times m}$ and
$f_j^i(\theta)$ is the $j$-th component of $f_i(\theta) := f(x_i, \theta)$.
The term $H(\theta)$ here is also called the generalized Gauss-Newton (GGN) matrix,
which is a good approximation to the Hessian matrix. It is positive semi-definite (PSD) if the loss function $\ell$ is convex.
% However, the Hessian matrix  \eqref{HStru} may not be PSD due to the term $\Pi(\theta)$. 

In many problems, it is not easy to compute the Hessian matrix or even the GGN, but for deep learning problems there exists some good estimators in some cases.
%But for the diagonal block with respect to one layer which we call block Hessian matrix, each block can be computed by recursion. We refer the interested readers to \cite{practicalGN} where the authors prove that block Hessian matrix can be written as a kronecker product of two matrices for fully-connected network and similar results are also derived for convolutional network under some assumptions in section \ref{S2QN-DL}.
%Fisher information matrix (FIM) is a good alternative to approximate the Hessian matrix for deep learning.
 %, which coincides with the GGN matrix for the square loss and the cross entropy function.
Assume that the loss function is the negative log probability
associated with a distribution, that is, $\ell(f(x_i,\theta),y_i)= - \log p(y_i|x_i,\theta)$.
%with a density function
%$p(y|x,\theta)$ defined by the neural network and
%parameterized by $\theta$, 
The corresponding FIM is defined as
\begin{equation*}
\begin{aligned}
%\text{FIM} &=\\
\hspace{-0.5ex}\frac{1}{N} &\sum_{i=1}^N \mbE_{z\sim p(z|x_i,\theta)} \nabla_\theta  \ell(f(x_i,\theta),z)\pp{\nabla_\theta  \ell(f(x_i,\theta),z)}^\top.
\end{aligned}
\end{equation*}
%In general, the FIM does not coincide with the GGN matrix, but for the
For the square loss and the cross entropy loss function, GGN and FIM are equal. The empirical FIM (EFIM) matrix is also a good choice and is defined as:
 \[ \textbf{EFIM}  := \frac{1}{N} \sum_{i=1}^N \nabla \psi_i(\theta)\nabla \psi_i(\theta)^\top.\]
The evaluation of EFIM uses the sample gradients. Therefore, it does not require extra backward passes.
\section{Structured Stochastic Quasi-Newton Methods (\textbf{S2QN})} \label{sec:sto-qn}
In this section, we propose a structured stochastic quasi-Newton method to enhance curvature information. 
First, let us describe a second-order framework for the problem (\ref{finite-sum}). At the $k$-th iteration, a quadratic approximation model is constructed as follows:
\be
\label{quadratic-model}
\min_d \; m_k(d) := g_k^\top d + \frac{1}{2}d^\top (B_k+\lambda_k I) d,\ee
where $g_k$ and $B_k$ are the estimations of the gradient and the Hessian matrix at $\theta_k$, respectively. A common strategy to choose $g_k$ is the mini-batch gradient $ \nabla_{\mathcal{S}_g} \Psi(\theta)= \frac{1}{|\mathcal{S}_g|}
\sum_{i\in\mathcal{S}_g} \nabla \psi_i(\theta)$ for a given index set $\mathcal{S}_g \subseteq
 \{1,2,\dots,N\}$. The regularization parameter $\lambda_k$ is adjusted by the norm of the stochastic gradient. Specifically, for given $r_{1} <  r_{2}$ and sequence $\{\alpha_k\}$, we propose
%\be
%\label{lambda-adjust}
%\lambda_{k+1} =
%\begin{cases}
%% \alpha_k  & \|g_k\| \leq r_{1,k}^{-1}, \\
%(r_{1} \alpha_k \|g_k\|)^{-1}  & \|g_k\| \in [r_{1}^{-1},r_{2}^{-1} ],\\
%\alpha_k^{-1} & \text{else.}
%% \alpha_k & \|g_k\|  > r_{2,k}^{-1}.\\
%\end{cases}
%\ee

\be
\label{lambda-adjust}
\lambda_{k} =
\begin{cases}
\frac{2r_1}{\|g_{k-1}\|+r_1}\alpha_k^{-1} &\|g_{k-1}\|<r_1, \\
\frac{2\|g_{k-1}\|}{\|g_{k-1}\|+r_2}\alpha_k^{-1} &\|g_{k-1}\|>r_2, \\
\alpha_k^{-1} &\text{otherwise}. \\
\end{cases}
\ee

 By solving the model (\ref{quadratic-model}), 
we update the parameter $\theta$ by 
$$\theta_{k+1} = \theta_k + \beta_k d_k, \quad d_k =- (B_k+\lambda_k I)^{-1}g_k,$$
where $\beta_k$ is the step size.
The strategy \eqref{lambda-adjust} is similar to (but still different from) the adjustment of the trust region radius in \cite{curtis2019fully}. It can be further viewed as a stochastic Levenberg-Marquardt method for generalized nonlinear least squares problem. The convergence analysis of our method is more straightforward compared to that in \cite{bergou2018stochastic}.
\subsection{Structured Stochastic quasi-Newton Matrix}
The key concept is to use a quasi-Newton method to compensate the difference between the partial Hessian information and the true Hessian matrix. That is, the
matrix $B_k$ is constructed as
\be \label{eq:Bk} B_k= H_k + \Lambda_{k}.\ee
Specifically, $H_k$ represents matrix with partial Hessian information which can be either the subsampled Hessian matrix, the GGN/FIM/EFIM, or any other approximation matrices.
The matrix $\Lambda_{k}$ compensates the difference with the true Hessian matrix. We call $H_k$ the base matrix and $\Lambda_k$ the refinement matrix. The quasi-Newton method is used to update $\Lambda_{k}$ under some constraints. For example,  $\Lambda_k = \text{QN}(u_{k-1},v_{k-1},\Lambda_{k-1})$ satisfies the secant condition: 
\be
\label{StructConI}
B_k u_{k-1} =  (H_k + \Lambda_k)u_{k-1} = \hat v_{k-1},
\ee
or equivalently,
\[
 \Lambda_{k} u_{k-1} =  \hat v_{k-1} -  H_k u_{k-1} := {v}_{k-1},
\]
where $u_{k-1} = \theta_{k} - \theta_{k-1}$ and $\hat v_{k-1} = \nabla_{\samples_{g}^{k-1}}\Psi(\theta_k)  - \nabla_{\samples_{g}^{k-1}}\Psi(\theta_{k-1}).$ Other structured conditions can be considered as long as they are reasonable. Since we compute $\hat v_{k-1}$ by using two gradients on the same samples in \eqref{StructConI} , we can consider an extra-step technique \cite{yangSEQN} if needed. Note that the pairs $\left<u_k,v_k\right>$ can be constructed by using other mechanisms, see \cite{BerNocTak16, ByrHanNocSin16, gower2016stochastic}. These methods can be integrated into our framework naturally.

\subsection{Representation of the Inverse Matrix}
 For an efficient computation of the direction $d_k$, we update the
 matrix $\Lambda_{k}$ by the limited-memory BFGS (L-BFGS) method
 \cite{nocedal2006numerical, sun2006optimization}. 
% One widely-used approach is the L-BFGS method. 
 Assume that there are $p$ pairs of vectors:
\be
\label{eq:uv-pair}
\begin{aligned}
U_k& = [u_{k-p},\dots,u_{k-1}]\in \mbR^{n\times p}, \\V_k& = [v_{k-p},\dots,v_{k-1}]\in\mbR^{n\times p}.
\end{aligned}
\ee
%where $u_i^\top v_i \ge \epsilon_B \|u_i\|_2^2$ for $i=k-p,\ldots,k-1$ and $\epsilon_B>0$.
For a given initial matrix  $\Lambda_{k}^0$, a compact representation of the  L-BFGS matrix is:
\begin{equation} \label{eq:L-BFGS}
\Lambda_{k} =\text{QN}(U_k,{V}_k):=\text{LBFGS}(U_k,{V}_k)= \Lambda_{k}^0 - C_kP_k^{-1}C_k^\top,
\end{equation}
where
\begin{align*}
C_k:=&C_k(U_k,{V}_k)=\left[\begin{matrix}
\Lambda_{k}^0U_{k}, {V}_k
\end{matrix}\right]\in \mbR^{n\times 2p}, \\
P_k:=&P_k(U_k,{V}_k) =\left[ \begin{matrix}
U_k^\top \Lambda_{k}^0 U_k & L_k \\
L_k^\top & -D_k\\
\end{matrix} \right ]\in \mbR^{2p\times 2p},\\
(L_k)_{i,j}:=&(L_k(U_k,{V}_k))_{i,j}, \\
=&
\left \{
\begin{aligned}
&u_{k-p+i-1}^\top {v}_{k-p+j-1}    & \quad \text{if}\quad  i>j,\\
&0 &\quad \text{otherwise,}
\end{aligned} \right.
\\
D_k=&D_k(U_k,{V}_k) =\text{diag}\left[ u_{k-p}^\top {v}_{k-p},\dots,u_{k-1}^\top {v}_{k-1}\right].
\end{align*}
The initial matrix $\Lambda_{k}^0$ is usually set to be $ \gamma_k I$, where $\gamma_k$ is a positive scalar.
% One may even apply
%the Nystr\"om technique again to provide a low-rank approximation.
%Note that the two-loop recursion of the L-BFGS (Algorithm 7.4 in
%    \cite{nocedal2006numerical}) is
%based on the approximation to the inverse of the Hessian. It takes $H_k$ in an
%equation for an initial direction. %  \eqref{eq:Bk}
%However, the scheme \eqref{eq:L-BFGS} targets at the Hessian
%directly.
In order to ensure the positive definiteness of $\Lambda_{k}$, the pair $\{u_i,v_i\}$ should
satisfy $u_i^\top v_i \ge \epsilon_B \|u_i\| \|v_i\|$ with a small constant, say
$\epsilon_B=10^{-8}$. We can consider the damping strategy in \cite{WanMaGolLiu17}. Although $H_k$ may not be positive definite due to the nonconvexity
of the functions $\psi_i(\theta)$, the parameter
$\lambda_k$ can be adjusted suitably such that $B_k+\lambda_k I$ has good
properties to generate descent directions.

We now show how to compute the inverse of $B_k+\lambda_k I$. Let $\tilde{
H}_k=H_k+\Lambda_{k}^0+ \lambda_k I$. Assume that $\tilde H_k$ is invertible, otherwise the regularization parameter $\lambda_k$ can be adjusted accordingly. By using
the Sherman-Morrison-Woodbury (SMW) formula, we obtain
\begin{equation}
\label{inverse-hybrid-wk}
\begin{aligned}
(B_{k} + \lambda_k I)^{-1} %&= ( H_{k}+A_{k} + \lambda_k I)^{-1}\\
&= (\tilde H_k - C_kP_k^{-1}C_k^\top)^{-1}  \\&= \tilde{H}_k^{-1} + \tilde{H}_k^{-1} C_k T_k^{-1}C_k^\top\tilde{H}_k^{-1},
\end{aligned}
\end{equation}
 where $T_k = P_k- C_k^\top\tilde{H}_k^{-1}C_k.$
Note that the main computational cost in \eqref{inverse-hybrid-wk} is the
inversion of $\tilde{H}_k$. Assume that $\Lambda_k^0$ is set to be $\gamma_k I$ and that $H_k$ can be easily obtained, then the computation of $\tilde{H}_k = H_k + (\gamma_k + \lambda_k)I$ is also cheap. Meanwhile, since $p$ is usually small, \eg, $1\sim 5$, the computational cost of \eqref{inverse-hybrid-wk} can be controlled.
%Fortunately, the special structures of the matrices
%$H_k$ and $A_k^0$ still allow an efficient evaluation in several cases, for
%example, if $H_k$ is the KFAC matrix  \eqref{eq:KFAC-H} and  $A_k^0$ is a simple
%diagonal matrix.

%Then with the same update process as above, we obtain $d_k^j = - (B_k^j)^{-1}\nabla_{\samples_k} L(\theta^k)^j$.

%\subsection{Low rank}

\subsection{Explicit Inverse by Low-Rank Structures}

In many cases, the base matrix $H_k= Q_k Q_k^\top$ is low-rank, where $Q_k\in
\Rn^{n\times r}, r\ll n$. For example,  the subsampled EFIM in the sketchy natural gradient method \cite{yang2020SENG} is low-rank and its rank is related to the sample size.
For convenience of notation, we define:
\[\widetilde \Lambda_{k} =\Lambda_{k}^0+\lambda_k I, \; \widetilde P_k = \begin{bmatrix} P_k^{-1}&0 \\
0 & -I\end{bmatrix}, \; \widetilde C_k = [C_k, Q_k].\]
Using the SMW formula yields:
\be
\label{low-rank-inv}
\begin{aligned}
(B_k+\lambda_k I)^{-1} &=  (\widetilde \Lambda_{k}^0-\widetilde C_k \widetilde P_k^{-1}\widetilde C_k^\top)^{-1}\\
 &=  \widetilde \Lambda_{k}^{-1}+\widetilde \Lambda_{k}^{-1}\widetilde C_k\hat{T}_k^{-1} \widetilde C_k^\top\widetilde \Lambda_{k}^{-1},
 \end{aligned}
\ee
where $\hat{T}_k =
\widetilde P_k-
\widetilde C_k^\top\widetilde \Lambda_{k}^{-1}
\widetilde C_k$.
The size of $\hat{T}_k$ is $r + 2p$. Therefore, its inverse matrix can be computed fast.
Since $\Lambda_k^0$ is usually set to be $\gamma_k I$, the computational cost \eqref{low-rank-inv} can be much smaller than the cost of inverting the matrix directly.
% as long as the rank is kept low.
% \subsection{Some Useful Strategies}
% \begin{itemize}
% \item  \textbf{Delayed Update}

  \subsection{Block Approximation}
Our structured quasi-Newton method can also be easily extended to the case when  $B_k =
\text{diag}\{B_k^1,\dots,B_k^L\}$ is a  block diagonal matrix. The case often occurs in practice. For $L$-layers neural network, to reduce the computational complexity, the curvature matrix is chosen to be a block-diagonal matrix and constructed layer by layer  \cite{grosse2016kronecker, yang2020SENG}. 

One requirement is 
that the $j$-th block of $\Lambda_{k}$ should satisfy the following secant equation:
$ \Lambda_{k}^j u_{k-1}^j = \hat v_{k-1}^j-H_k^ju_{k-1}^j := v_{k-1}^j$,
where $u_{k-1}^j$ and  $ v_{k-1}^j$ are the sub-vector of $u_{k-1}$ and
$v_{k-1}$ corresponding to the $j$-th block.

\LinesNumberedHidden
\begin{algorithm2e}[t]
%\small
\caption{Structured Stochastic Quasi-Newton Methods (S2QN)}
\label{S2QN}
\lnlset{alg:s2qn}{1}{Initialization: Choose an initial point $\theta_0$.
Select the sequence $(\alpha_k)$, $(\beta_k)$. Set the memory size $p$.} \\
\For{$k = 0,1,... $}{
\lnlset{alg:s2qn}{2}{Choose the random sample sets $\mathcal S_g^k, \mathcal S_H^k \subset [N]$. \\Compute $\nabla_{\samples_g^k} \Psi(\theta_{k})$ and base matrix $H_k$. \\
\lnlset{alg:s2qn}{3}{Compute the refinement matrix $\Lambda_{k} = \text{QN}(U_k,V_k)$.} \\
\lnlset{alg:s2qn}{4}{Adjust the regularization parameter $\lambda_k$ by \eqref{lambda-adjust} and compute the direction $d_k$. Update $ \theta_{k+1} = \theta_{k} + \beta_k d_k$. }} \\
\lnlset{alg:s2qn}{5}{Compute the gradient $ \nabla_{\samples_g^k} \Psi(\theta_{k+1})$ with the same samples and update the pairs $(U_{k+1},V_{k+1})$.} \\

%\lnlset{alg:seqn-1}{6}{\textbf{OPTION I:} Perform and update the extra step $\theta^{k+1} = z^{k} - \beta_k \nabla_{\samples_g^k} \Psi(z^{k})$.} \\
%\textbf{OPTION II:} Set $\theta^{k+1} := z^k.$
}

\end{algorithm2e}
\section{S2QN For Deep Learning}
\label{S2QN-DL}
In this section, we extend the structured stochastic quasi-Newton method to deep learning problems by combining their characteristics. We first prove that the  block Hessian matrix for each convolutional layer is a kronecker product of two matrices under some assumptions, then propose a sketchy block BFGS method in this setting. 

%Let us investigate the property of Hessian matrix for the convolutional layer.  
%Some assumptions are need for the convolutional layer.
The input of a convolutional layer is 	a set of activations $\{a_{j,t}\}$, where  $j \in \{1,2,\dots, \mathcal{J}\}$, $t\in \mathcal{T}$ and $\mathcal{T}$ is the set of spatial locations (typically a 2-D grid). We further write the effect area of the convolutional filter by $\Delta=\{-K,\dots,K\}\times \{-K,\dots,K\}$ and the learned weight by $\Theta$.
We also assume that the padding size equals $K$ and the stride is $1$. Assume that the outputs of the convolutional layer have $\mathcal{I}$ channels, then the set of pre-activations $\{s_{i,t}\}$ is as follows (we do not consider the bias for simplicity): $${s}_{i,t} = \sum_{\delta\in \Delta} \Theta_{i,j,\delta}a_{j,t+\delta},
$$
where  $i \in \{1,2,\dots, \mathcal{I}\}$ and $t\in \mathcal{T}.$ 
Therefore, the gradient of the component function with respect to one single sample $(x,y)$ can be computed as:
$\mathcal{D}\Theta_{i,j,\delta} = \sum_{t\in \mathcal{T}} a_{j,t+\delta} \mathcal{D}{s}_{i,t},$ or $\mathcal{D}\widetilde \Theta = G(x,y; \widetilde\Theta)A(x,\widetilde\Theta)^\top$ in matrix form
where we use the notation $\mathcal{D}(\cdot)$ to represent $\frac{\partial \psi}{\partial (\cdot)}$ and $\widetilde \Theta$ is the matrix form of $\Theta$. 
Differentiating again, we find that the elements of Hessian can be computed as:
\be
\label{CNN-Hessian}
	(\mathcal{D}^2\Theta)_{i,j,\delta;i', j',\delta'}
	=\sum_{t,t'\in \mathcal{T}}(\mathcal{D}^2{s})_{i,t;i',t'}a_{j, t+\delta}a_{ j',t'+\delta'},
\ee
where $(D^2 \Theta)_{i,j,\delta;i',j',\delta'} := \frac{\partial^2 \psi}{(\partial \Theta_{i,j,\delta})(\partial \Theta_{i',j',\delta'})}$ and $(D^2{s})_{i,t; i', t'} := \frac{\partial^2 \psi}{(\partial {s}_{i,t})(\partial {s}_{i',t'})}$.
However, since the size of the matrix is still large, it is not tractable to compute the block Hessian matrix. 
%There are mainly two reasons: one is the large size of the matrix, the other is that computing the $\mathcal{D}\Theta$ needs another backward process which involves a huge computational cost. 
Instead, analogous to the approximation mechanism that KFAC made in \cite{grosse2016kronecker}, we approximate the Hessian matrix \eqref{CNN-Hessian} by a product of two smaller matrices. Similar results for fully-connected networks have been studied in \cite{practicalGN}. We list some necessary assumptions below.
\begin{assumption}
\label{kronecker-assump}
\begin{enumerate}
	\item [1.1)] The activations are independent of the second-order derivatives of the pre-activations.
	\item [1.2)]
%	{Spatial homogeneity: }
%	The first-order statistics of the activations are independent of spatial location. 
	The second-order statistics of pre-activation derivatives at any two spatial locations $t$ and $t'$ depend only on $t'-t$, which means there exists function $\Gamma$ such that:
	\bee
%		\mathbb{E}[(\mathcal{A})_{t,j}]&=M_i(j)\\
		\mathbb{E}\left [(\mathcal{D}^2{s})_{t,i;t',i'} \right] =\Gamma(i,i',t'-t).
	\eee
%	\item [1.3)]Spatially uncorrelated Hessian: The pre-activation second-order derivatives at any two distinct spatial locations are uncorrelated
%	\bee
%		\Gamma(i,i',\delta)=0 \quad\mbox{for} \quad \delta\neq 0.
%	\eee
\end{enumerate}
\end{assumption}
\begin{lemma}
\label{lemma-block}
Suppose that Assumptions \ref{kronecker-assump} are satisfied and there exists function $\Omega$ such that
\[\mathbb{E}\left [a_{j,t}a_{j',t'} \right]=\Omega(j,j',t'-t). \]
If the second-order derivatives of the pre-activations at any two distinct spatial locations are uncorrelated
  \bee
		\Gamma(i,i',t-t')=0, \quad\mbox{for} \quad t\neq t',
	\eee
	then we have:
\[
\mathbb{E}\left [(\mathcal{D}^2\Theta)_{i,j,\delta;i', j',\delta'}\right ] =\chi(\delta,\delta') \Omega(j,j',\delta'-\delta)\Gamma(i,i',0).
\]
Hence, the Hessian matrix is a kronecker product of two matrices:
\begin{equation}
\label{HessianKC}
\mathbb{E}\left [\frac{\partial^2 \psi}{\partial^2\ovec(\theta)}\right]= {\mathbf{A}}\otimes \mathbf{G},
\end{equation}
where 
%\begin{align*}
$(\mathbf{A})_{j|\Delta|+\delta, j'|\Delta|+\delta'}=\chi(\delta,\delta')\Omega(j,j',\delta'-\delta)$, $(\mathbf{G})_{i,i'}=\Gamma(i,i',0),$
%\end{align*}
and $\chi(,)$ is a function for indexing the location. 
\end{lemma}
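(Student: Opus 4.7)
The plan is to start from the explicit expression
\[
(\mathcal{D}^2\Theta)_{i,j,\delta;i', j',\delta'} = \sum_{t,t'\in \mathcal{T}} (\mathcal{D}^2 s)_{i,t;i',t'}\, a_{j,t+\delta}\, a_{j',t'+\delta'}
\]
given in \eqref{CNN-Hessian}, take expectations on both sides, and use the three hypotheses of the lemma to collapse the double sum into the claimed rank-one product of factors in $(j,j',\delta,\delta')$ and $(i,i')$. From there the Kronecker factorization \eqref{HessianKC} is just a reindexing.

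First I would swap the sum and the expectation (finite sum, so no issue), and apply Assumption 1.1, the independence of the activations from the second-order pre-activation derivatives, to split each summand as $\mathbb{E}[(\mathcal{D}^2 s)_{i,t;i',t'}]\cdot \mathbb{E}[a_{j,t+\delta} a_{j',t'+\delta'}]$. The first factor equals $\Gamma(i,i',t'-t)$ by Assumption 1.2, and the second factor equals $\Omega(j,j',(t'+\delta')-(t+\delta))$ by the $\Omega$-hypothesis of the lemma. Next, the uncorrelatedness hypothesis $\Gamma(i,i',t-t')=0$ for $t\ne t'$ kills every off-diagonal term, leaving only the diagonal $t=t'$. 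On that diagonal the first factor becomes the constant $\Gamma(i,i',0)$, while the second factor becomes $\Omega(j,j',\delta'-\delta)$, which is independent of $t$ as well. Hence
\[
\mathbb{E}[(\mathcal{D}^2\Theta)_{i,j,\delta;i',j',\delta'}] = \Gamma(i,i',0)\,\Omega(j,j',\delta'-\delta)\sum_{t\in\mathcal{T}} 1_{\{(t+\delta),(t+\delta')\text{ valid}\}},
\]
and I would bundle the remaining combinatorial factor (which depends only on $(\delta,\delta')$ and accounts for padding, since with padding $K$ some shifted indices fall in the zero-padded region) into the symbol $\chi(\delta,\delta')$. This is precisely the first displayed identity in the lemma.

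For the Kronecker statement \eqref{HessianKC}, I would observe that the right-hand side factors as a product of two terms, one depending only on $(j,\delta,j',\delta')$ and one depending only on $(i,i')$. Hence, under a row-major unrolling of $\widetilde{\Theta}$ (the convention implicit in $\ovec(\theta)$ for a convolutional layer), the matrix of second derivatives is exactly $\mathbf{A}\otimes\mathbf{G}$ with $(\mathbf{A})_{j|\Delta|+\delta,\,j'|\Delta|+\delta'}=\chi(\delta,\delta')\Omega(j,j',\delta'-\delta)$ and $(\mathbf{G})_{i,i'}=\Gamma(i,i',0)$, matching the stated definitions.

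The statistical content is essentially immediate from the hypotheses; the main obstacle is bookkeeping. Specifically, one must (i) justify the boundary/padding bookkeeping encoded in $\chi(\delta,\delta')$, being careful that the activations outside the valid spatial grid are set to zero so that $\mathbb{E}[a_{j,t+\delta}a_{j',t+\delta'}]$ either equals $\Omega(j,j',\delta'-\delta)$ or vanishes depending on whether both shifted indices remain in $\mathcal{T}$, and (ii) verify that the flattening convention used for $\widetilde{\Theta}$ aligns the block index $(j,\delta)$ with the outer factor of the Kronecker product and the channel index $i$ with the inner factor, so that the displayed matrix identity is consistent with the scalar identity derived above.
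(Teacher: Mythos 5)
Your proof is correct and follows exactly the argument the paper intends: the paper omits the proof of Lemma \ref{lemma-block}, deferring to Theorem 1 of \cite{grosse2016kronecker}, and your derivation (take expectations in \eqref{CNN-Hessian}, factor by Assumption 1.1, apply the homogeneity hypotheses, kill the off-diagonal terms by uncorrelatedness, absorb the boundary count into $\chi$, then reindex into the Kronecker form) is precisely that argument. Your attention to the padding bookkeeping behind $\chi(\delta,\delta')$ is a welcome point of care that the paper leaves implicit.
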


We omit the proof of  the Lemma \ref{lemma-block} since it is similar to Theorem 1 in \cite{grosse2016kronecker} which interested readers can refer to. By now, we have proved that the block Hessian matrix for convolutional layer can be written as a kronecker product of smaller matrices. The approximation is efficient. For example, for some layer in Resnet50 v1.5 with Imagenet-1k dataset, the size of block Hessian matrix is $2, 359, 296$ while the size of $\mathbf{G}$ and $\mathbf{A}$ are $512$ and $4, 608$, respectively. In the next, we show that by using the special properties of the block Hessian matrix, we can combine the structured quasi-Newton idea with KFAC approximation naturally.
\subsection{Stochastic Structured QN for Kronecker-Factored Approximation }
 The KFAC method \cite{grosse2016kronecker} approximates the FIM by a block-diagonal matrix where each block $\mfF_{\text{KFAC}}$ approximates the block Fisher matrix by a kronecker product of two smaller matrices as follows:
\be
\label{kfac-approx}
\mfF_{\text{KFAC}} =  \mathbf{ \hat A} \otimes \mathbf{\hat G},
\ee
where $\mathbf{ \hat  A} =  (\frac{1}{|\samples|}\sum_{x_i\in \samples} A(x_i,\Theta)A(x_i,\Theta)^\top)$ and 
$ \mathbf{ \hat G}= (\frac{1}{|\samples|}\sum_{x_i\in \samples} \mathbb{E}_{z \sim p(z|x_i,\theta)} G(x_i,z;\Theta)G(x_i,z;\Theta)^\top).$
However, extra backward passes are required to compute $\mathbf{ \hat G}$. Notice that \eqref{kfac-approx} has the similar kronecker structure as \eqref{HessianKC}. Hence, we can combine the structured QN idea and use the empirical version of \eqref{kfac-approx} as the base matrix to approximate the Hessian matrix \eqref{HessianKC}. Assume the refinement matrix $\Lambda_k$ takes the kronecker factorization, then the structured QN matrix is constructed as follows:
\be
\label{DL-Struc}
B_k = \mathbf{\hat{A}}_k \otimes \mathbf{\hat G}_k + \Lambda_k \approx \mathbf{\hat{A}}_k \otimes \left ( \mathbf{\tilde G}_k + \tilde \Lambda_k \right),
\ee
%and approximate the Hessian matrix \eqref{HessianKC} by adding a refinement matrix $\Lambda$.
 where $\mathbf{\tilde G}_k  = \frac{1}{|\samples^k|}\sum_{(x_i,y_i) \in \samples^k}G(x_i,y_i;\Theta_k)G(x_i,y_i;\Theta_k)^\top.$

We propose two different conditions. 
The first strategy is to let $ \tilde \Lambda_k$ satisfy the stochastic secant equation: 
\begin{align*}
\left( \mathbf{\hat{A}}_k \otimes \left ( \mathbf{\tilde G}_k + \tilde \Lambda_k \right)\right) u_k = v_k,
%\left((\sum AA^\top)\otimes(\sum GG^\top+\Lambda)\right)u_k = v_k,
\end{align*}
where $u_k =\theta_k - \theta_{k-1}$ and $ v_k =\nabla_{\samples_{g}^{k-1}}\Psi(\theta^k)  - \nabla_{\samples_{g}^{k-1}}\Psi(\theta^{k-1}).$
Equivalently, $\tilde \Lambda_k$ is required to satisfy:
\be \label{DL-cond1} \tilde \Lambda_k \hat U_k  = \hat V_k - \mathbf{\tilde G}_k\hat U_k \mathbf{\hat{A}}_k,\ee
where $\hat U_k$ and $\hat V_k$ are the matrix format of $u_k$ and $v_k$, that is, $u_k = \ovec(\hat U_k) , v_k = \ovec(\hat V_k)$. 

The second strategy is to make $ \mathbf{\tilde G}_k + \tilde \Lambda_k$ closer to $\mathbb{E}[(\mathcal{D}^2{s}_k)]$ directly.  In this case, $\tilde \Lambda_k$ should satisfy
\[\left( \mathbf{\tilde G}_k + \tilde \Lambda_k\right)\tilde U_k = \tilde V_k,\]
where $\tilde U_k = \text{mat} ({s}_{k} - {s}_{k-1})$, $\tilde V_k =\text{mat} ( \mathcal{D}{s}_{k} -  \mathcal{D}{s}_{k-1} )$. The operator $\text{mat}$ reshapes the arrays into matrices with correct sizes.  Equivalently, the condition is as follows:
\be \label{DL-cond2}\tilde \Lambda_k \tilde U_k = \tilde V_k - \mathbf{\tilde G}_k \tilde U_k.\ee
Under the spatial homogeneity assumptions, $\mathbb{E}[\mathcal{D}^2 {s}]$ is the same for all locations. Therefore, the above condition can be regarded as $|\mathcal{T}|$ secant conditions for all spatial coordinates. The condition \eqref{DL-cond2} can be equivalently formed as $\left(I \otimes (\mathbf{\tilde G}_k + \tilde \Lambda_k )\right)\ovec(\tilde U_k) = \ovec(\tilde V_k)$. 
Since $\mathbf{\hat{A}}_k$ is an estimator, the two conditions \eqref{DL-cond1} and \eqref{DL-cond2} are actually different. Note that both conditions are in matrix formats, and they can be viewed as modified multi-secant conditions. 

%In the next part, we propose a stochastic block quasi-Newton method to generate the corresponding matrix.

Recently, a practical QN method \cite{goldfarb2020practical} also takes the kronecker factorization \eqref{DL-Struc}. The distinction is that they construct two quasi-Newton schemes for the two parts of the kronecker product \eqref{HessianKC} for the fully-connected layer.  On the other hand, our proposed method compensates the base matrix with a quasi-Newton matrix and is also available for the convolutional layer. In the next, we show how to generate the refinement matrix by the sketchy block BFGS method.
\subsection{Sketchy Block BFGS Methods}
%In this part, we first describe the block BFGS method \cite{byrd1994representations} and later extend it to our setting.
%To satisfy the secant equation, the classical BFGS update is as following:
%\be
%\begin{aligned}
%	B^{k+1}&=B^k+V^k((V^k)^\top U^k)^{-1}(V^k)^\top\\
%	& -B^kU^k((U^k)^\top B^kU^k)^{-1}(U^k)^\top B^k
%\end{aligned}
%\ee
%Let $H^k=(B^k)^{-1}$, 
The refinement matrix $\Lambda_{k+1}$ in \eqref{DL-cond1} and \eqref{DL-cond2} takes the same format:
$
\Lambda_{k+1} \mathbb{U}_k = \mathbb{V}_k,
$
where $\mathbb{U}_k$ and $\mathbb{V}_k$ are matrices.
We use a block quasi-Newton update as follows:
\be\small
\label{block-bfgs-p}
\begin{aligned} 
	\Lambda_{k+1} &=\text{BlockQN}\left(\Lambda_{k},\mathbb{U}_k,\mathbb{V}_k,\mathbb{P}_k\right)\\
	&= \Lambda_k+\mathbb{V}_k(\mathbb{P}_k)^{-1}\mathbb{V}_k^\top -\Lambda_k\mathbb{U}_k(\mathbb{U}_k^\top \Lambda_k\mathbb{U}_k)^{-1}(\mathbb{U}_k)^\top \Lambda_k.
\end{aligned}
\ee
Let $\Lambda_{k+1}$ be symmetric and positive definite. However, since $\mathbb{V}_k^\top \mathbb{U}_k$ is not symmetric, $\mathbb{P}_k$ can be chosen as $\frac{1}{2} \left (\mathbb{V}_k^\top  \mathbb{U}_k + \mathbb{U}_k^\top  \mathbb{V}_k\right)$, $\text{Tr}(\mathbb{V}_k^\top  \mathbb{U}_k)$ or $\text{diag}(\mathbb{V}_k^\top  \mathbb{U}_k).$ We use the damping strategy to make $\mathbb{P}_k$ positive definite. Specifically, by choosing proper $\tau_k$, we replace $\mathbb{V}_k$ by $ \tau_k \mathbb{V}_k + (1-\tau_k) \Lambda_k \mathbb{{U}}_k$. Note that when $\mathbb{P}_k =\mathbb{V}_k^\top  \mathbb{U}_k,$ this is indeed the block BFGS method \cite{byrd1994representations} for multi-secant conditions. 
%However, in our case $\mathbb{V}_k^\top  \mathbb{U}_k$ is not symmetric, hence we use a modified strategy for $\mathbb{P}_k$.
%Assume that $\Lambda_k$ is symmetric positive definite matrix (SPD). If $\mathbb{P}_k$ is SPD, then $\Lambda_{k+1}$ is SPD. 

%However, $\mathbb{P}_k$ is not guaranteed to be symmetric in this case. We propose two methods to modify the update formula \eqref{block-bfgs-p}. The first way is to replace $\mathbb{V}_k^\top  \mathbb{U}_k$ in $\mathbb{P}_k$ by $\frac{1}{2} \left (\mathbb{V}_k^\top  \mathbb{U}_k + \mathbb{U}_k^\top  \mathbb{V}_k\right)$. The second way is to use $\text{Tr}(\mathbb{P}_k)$ or $\text{diag}(\mathbb{P}_k)$ to replace $\mathbb{P}_k$. To make $\text{Tr}(\mathbb{P}_k)>0$ or $\text{diag}(\mathbb{P}_k)\succeq 0$, we set $\mathbb{P}_k$ to be $\mathbb{P}_k = \mathbb{\overline{V}}_k^\top\mathbb{U}_k$, where 
%\begin{equation}
%\mathbb{\overline{V}}_k = \tau_k \mathbb{V}_k + (1-\tau_k) \Lambda_k \mathbb{{U}}_k
%\end{equation}
%for some proper $\tau_k$.

%\subsubsection{Implementation}
Since the size of $(\mathbb{V}_k)^\top \mathbb{U}_k$ and $\mathbb{U}_k^\top \Lambda_k \mathbb{U}_k$ is large, the computation of their inverse matrices is intractable. We use sketchy techniques to reduce the computational cost. Denote the sketching matrix by $\Xi_k$ and let $\mathbb{\tilde  V}_k =\Xi_k\mathbb{V}_k, \mathbb{\tilde  U}_k =\Xi_k\mathbb{U}_k$. Accordingly, the quasi-Newton update is changed to:
\be
\label{sketchy-BFGS}
%\begin{aligned}
	\Lambda_{k+1} = \text{BlockQN}\left(\Lambda_{k},\mathbb{\tilde U}_k,\mathbb{\tilde V}_k,\mathbb{\tilde P}_k\right),
%	\Lambda_k+\mathbb{\tilde V}_k(\mathbb{\tilde P}_k)^{-1}\mathbb{\tilde V}_k^\top -\Lambda_k\mathbb{\tilde U}_k((\mathbb{\tilde U}_k^\top \Lambda_k\mathbb{\tilde U}_k)^{-1}(\mathbb{\tilde U}_k)^\top \Lambda_k,
%\end{aligned}
\ee

where $\mathbb{P}_k$ is $\frac{1}{2} \left (\mathbb{\tilde V}_k^\top  \mathbb{\tilde U}_k + \mathbb{\tilde U}_k^\top  \mathbb{\tilde V}_k\right)$, $\text{Tr}(\mathbb{\tilde V}_k^\top  \mathbb{\tilde U}_k)$ or $\text{diag}(\mathbb{\tilde V}_k^\top  \mathbb{\tilde U}_k).$

Our method is different from the stochastic block BFGS studied in \cite{gower2016stochastic}. They design a special multi-secant condition by sketchy methods to squeeze more curvature information. In addition, $\mathbb{V}_k^\top \mathbb{U}_k$ is designed to be symmetric and positive definite in \cite{gower2016stochastic}.  However, our secant conditions \eqref{DL-cond1} and \eqref{DL-cond2} are constructed using the special kronecker structure and sketchy techniques are used to reduce the computational complexity.
\section{Theoretical Analysis} \label{sec:theo}
\subsection{Global Convergence}
In this part, we give a general  analysis for our structured stochastic quasi-Newton method under some mild assumptions.  When there is no $\Lambda_k$, the method degenerates to the Newton-type method. It is obvious that our
analysis fits both cases. Our global analysis is motivated by the strategies used in \cite{curtis2019fully,WanMaGolLiu17,yangSEQN}.

According to the stochasticity in the Algorithm \ref{S2QN}, we can define a filtration $\mathcal{F}_k$
such that $\theta_k \in \mathcal{F}_{k-1}$, $H_k$ and $\nabla_{\samples_g^k}\Psi(\theta_k)$ are $\cF_{k-1}$-independent.
%Note that the stochastics in the iteration (\ref{extra-step}) is from the random samples $\samples_g$ and $\samples_H$. We define the corresponding filtration as follows:
%\[\mathcal{F}_k:=\sigma(\samples_g^0,\samples_H^0,\dots,\samples_g^k,\samples_H^k).\]
A few necessary assumptions are listed below.
\begin{assumption} %Consider the function $\Psi : \Rn^n \to \Rn$.% we assume: %\vspace{.5ex}
\begin{itemize}
\item[2.1)] $\Psi$ is continuously differentiable on $\Rn^n$ and is bounded from
    below by $\Psi_{\text{inf}}$. The gradient $\nabla \Psi$ is Lipschitz continuous on $\Rn^n$ with $L_\Psi \geq 1$.
\item[2.2)] For any iteration $k$, 
we assume that $B_k$ and $\nabla_{\samples_g^k}\Psi(\theta_k)$ are $\cF_{k-1}$ -independent and 
 it holds almost surely that
\begin{align*}
&\Expe[\nabla_{\samples_g^k} \Psi(\theta_k)|\mathcal{F}_{k-1} ] = \nabla \Psi(\theta^k).
\end{align*}
\item [2.3)] It holds almost surely that the variance of stochastic gradient is bounded
\[
 \Expe[\|\nabla_{\samples_g^k} \Psi(\theta_k) -\nabla \Psi(\theta_k)
 \|^2|\mathcal{F}_{k-1}]  \leq  \sigma_k^2.
\]
\item[2.4)] There exists positive constant $h$ such that for all $k$,
\[
0 \preceq B_k \preceq hI.
\]
\end{itemize}
\end{assumption}
The above assumptions are common and standard in stochastic QN methods
\cite{BerNocTak16,ByrHanNocSin16,gower2016stochastic,WanMaGolLiu17,yangSEQN}. As shown in Algorithm \ref{S2QN}, the base matrix $H_k$ is
$\mathcal{F}_{k-1}$-independent of $\nabla_{\samples_g^k}\Psi(\theta_k)$ and the refinement matrix
$\Lambda_k \in \mathcal{F}_{k-1}$. Hence, $B_k + \lambda_k I$ and  $\nabla_{\samples_g^k}\Psi(\theta_k)$ are $\mathcal{F}_{k-1}$-independent.
%depends on $\samples_H^k$ and $A_k \in \mathcal{F}_{k-1}.$ Since $\samples_g^k$ and $\samples_H^k$ are independent, it is obvious that $B_k + \lambda_k I$ and  $\nabla_{\samples_g^k}\Psi(\theta^k)$ are $\mathcal{F}_{k-1}$-independent.
% \begin{theorem}
% Suppose that the assumption (A.1)-(A.3) are satisfied and assume the step sizes $(\alpha_k)_k$ and $(\beta_k)_k$ are chosen as follows:
% \be
% \alpha_k = \min \left\{ \frac{1}{L_{\Phi}},\frac{h_1}{4h_2^2L_{\Phi}}\right\}, \qquad \beta_k = \min\left\{\frac{\alpha_k}{2}, \frac{h_1\alpha_k}{24}\right\}.
% \ee
% Then, under the additional conditions:
% \be
% \label{stepsize-condition}
% \sum \alpha_k = \infty, \qquad \sum \alpha_k\sigma_k^2 < \infty, \qquad \sum \alpha_k\sigma_{k,+}^2 < \infty,
% \ee
% it follows that $\lim_{k}  \Expe\|\nabla \Phi(\te{k})\|^2 \rightarrow 0$ and $\lim_{k}  \nabla \Phi(\te{k}) \rightarrow 0$ a.s..
% \end{theorem}

\begin{theorem}\label{thm:globalconv}
Suppose that Assumptions \ref{thm:globalconv} is satisfied, the step sizes $\beta_k\equiv 1$ and the sequence $\{\alpha_k\}_{k=1}^\infty$ satisfy:
%\be
%\alpha_k  \leq \min \left\{ \frac{1}{L_{\Psi}},\frac{h_1}{4h_2^2L_{\Psi}}\right\}, \; \beta_k \leq \min\left\{\frac{\alpha_k}{4}, \frac{h_1\alpha_k}{8}\right\}.
%\ee
\be
\alpha_k \leq \frac{r_1}{4r_2(L_\Psi+ h)}.
\ee
Then, under the conditions:
$
\sum \alpha_k = \infty, \;
\sum \alpha_k\sigma_k^2 < \infty,$
it holds for Algorithm \ref{S2QN}  almost surely that
\[
%    \lim_{k \rightarrow \infty}  \Expe\|\nabla \Psi(\te{k})\| = 0 \text{ and }
    \lim_{k \rightarrow \infty} \nabla \Psi(\te{k}) =0.
\]
\end{theorem}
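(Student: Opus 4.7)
My plan is to combine a descent-lemma argument with the Robbins--Siegmund supermartingale convergence theorem, following the strategies of~\cite{curtis2019fully,WanMaGolLiu17,yangSEQN}.

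First, I would exploit the explicit form of $\lambda_k$ in \eqref{lambda-adjust}: a direct case analysis of the three branches shows $\alpha_k^{-1}\le \lambda_k\le 2\alpha_k^{-1}$ in every iteration. Combined with $0\preceq B_k\preceq hI$ from Assumption~2.4, this yields the sandwich $(h+2\alpha_k^{-1})^{-1}I \preceq (B_k+\lambda_k I)^{-1}\preceq \alpha_k I$, and hence the crude bound $\|d_k\|\le \alpha_k\|g_k\|$. I would track constants carefully to ensure the descent step below goes through.

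Next, I would start from the $L_\Psi$-smooth descent inequality
\[
\Psi(\theta_{k+1})\le \Psi(\theta_k)+\nabla\Psi(\theta_k)^\top d_k+\tfrac{L_\Psi}{2}\|d_k\|^2,
\]
and take conditional expectations given $\mathcal{F}_{k-1}$. The key subtlety is that $B_k$ is not $\mathcal{F}_{k-1}$-measurable; however, Assumption~2.2 states that $B_k$ and $g_k$ are conditionally independent given $\mathcal{F}_{k-1}$, which by iterated expectation lets me replace $g_k$ by $\nabla\Psi(\theta_k)$ in the linear term and bound it above by $-\|\nabla\Psi(\theta_k)\|^2/(h+2\alpha_k^{-1})\le -c\alpha_k\|\nabla\Psi(\theta_k)\|^2$. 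For the quadratic term, Assumption~2.3 gives $\mathbb{E}[\|d_k\|^2\mid\mathcal{F}_{k-1}]\le \alpha_k^2(\|\nabla\Psi(\theta_k)\|^2+\sigma_k^2)$. The step-size restriction $\alpha_k\le r_1/(4r_2(L_\Psi+h))$ is exactly what I expect to be needed to absorb the $\alpha_k^2\|\nabla\Psi(\theta_k)\|^2$ cross term into the linear descent, producing
\[
\mathbb{E}[\Psi(\theta_{k+1})\mid\mathcal{F}_{k-1}]\le \Psi(\theta_k)-c_1\alpha_k\|\nabla\Psi(\theta_k)\|^2+c_2\alpha_k^2\sigma_k^2.
\]

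Since $\Psi$ is bounded below and $\sum\alpha_k^2\sigma_k^2\le (\sup_k\alpha_k)\sum\alpha_k\sigma_k^2<\infty$, Robbins--Siegmund applied to $\Psi(\theta_k)-\Psi_{\text{inf}}$ gives, almost surely, that $\Psi(\theta_k)$ converges and $\sum_k\alpha_k\|\nabla\Psi(\theta_k)\|^2<\infty$. Together with $\sum\alpha_k=\infty$ this immediately forces $\liminf_k\|\nabla\Psi(\theta_k)\|=0$. The main obstacle is the last step, promoting $\liminf=0$ to $\lim=0$. I would use Lipschitz continuity of $\nabla\Psi$ to control the single-step fluctuation via $|\|\nabla\Psi(\theta_{k+1})\|-\|\nabla\Psi(\theta_k)\||\le L_\Psi\|d_k\|$, and combine the resulting $\ell^2$-summability of $\|d_k\|$ (in expectation) with the standard level-crossing contradiction used in~\cite{WanMaGolLiu17,yangSEQN}: if $\|\nabla\Psi(\theta_k)\|>\varepsilon$ infinitely often on a set of positive probability, the smallness of the jumps would force $\|\nabla\Psi\|$ to remain above $\varepsilon/2$ on long index blocks, contradicting $\sum\alpha_k\|\nabla\Psi(\theta_k)\|^2<\infty$ because $\sum\alpha_k=\infty$. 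I expect this crossing argument to be the most delicate part; Steps 1--3 are largely computational.
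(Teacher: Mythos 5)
Your proposal is correct and follows essentially the same route as the paper's proof: the descent lemma plus the three-case analysis of $\lambda_k$ (giving $\alpha_k^{-1}\le\lambda_k\le 2\alpha_k^{-1}$) yields a one-step bound of the form $\mathbb{E}[\Psi(\theta_{k+1})\mid\mathcal{F}_{k-1}]\le\Psi(\theta_k)-c_1\alpha_k\|\nabla\Psi(\theta_k)\|^2+c_2\alpha_k\sigma_k^2$, from which $\sum_k\alpha_k\|\nabla\Psi(\theta_k)\|^2<\infty$ a.s.\ and $\liminf_k\|\nabla\Psi(\theta_k)\|=0$ follow, and the full limit is obtained by the identical level-crossing contradiction (the paper sums expectations directly where you invoke Robbins--Siegmund, and it handles the linear term by splitting $\nabla\Psi=g_k+(\nabla\Psi-g_k)$ with Young's inequality where you use conditional independence, but these are cosmetic differences). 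The one point to tighten is that the crossing step requires the weighted bound $\sum_k\alpha_k^{-1}\|\theta_{k+1}-\theta_k\|^2<\infty$ (which your estimate $\|d_k\|\le\alpha_k\|g_k\|$ does deliver) rather than plain $\ell^2$-summability of $\|d_k\|$, since the Cauchy--Schwarz step over a crossing block must pair against $\sum_{k=p_i}^{q_i-1}\alpha_k\to 0$.
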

The proof is shown in Appendix. In addition to the condition $\sum \alpha_k\sigma_k^2 < \infty,$ we can also consider the bounded variance assumption in \cite{WanMaGolLiu17} by adjusting relative conditions.
%The proof is shown in Appendix C.
We next show the convergence results for a class of objective function that satisfies the Polyak- \L ojasiewicz (P\L) condition. The P\L  \ condition can bound the function value by the gradient norm squares and is widely used in algorithm analysis \cite{chang2018generalization, karimi2016linear, lei2019stochastic}. It is illustrated in \cite{foster2018uniform} that one-hidden neural networks and ResNets with linear activation functions satisfy the P\L \ condition. 
\begin{assumption}\textbf{(P\L \ condition)}
\label{PLcondition}
There exists a constant $c\in \left (0,\infty \right)$ for all $\theta \in \Rn^n$, such that
\[
2c (\Psi(\theta) - \Psi_{\text{inf}}) \leq \|\nabla \Psi(\theta)\|^2. 
\]
\end{assumption} 

\begin{theorem}\label{thm:globalconv}
Suppose that Assumptions \ref{thm:globalconv}-\ref{PLcondition} are satisfied and the step sizes $\{\beta_k\}_{k=1}^\infty$ and the sequence $\{\alpha_k\}_{k=1}^\infty$  satisfy:
%\be
%\alpha_k  \leq \min \left\{ \frac{1}{L_{\Psi}},\frac{h_1}{4h_2^2L_{\Psi}}\right\}, \; \beta_k \leq \min\left\{\frac{\alpha_k}{4}, \frac{h_1\alpha_k}{8}\right\}.
%\ee
\be
\alpha_k \equiv \alpha < \min \left \{ \frac{r_1}{4r_2(L_\Psi+ h)}, \frac{8}{c}  \right\}, \quad \beta_k\equiv 1.
\ee
In addition, assume that the variance of the stochastic gradient is decreased by a geometric speed, \ie,
%\be
$\sigma_k^2 \leq  M_\sigma  \zeta^{k}$
%\ee
for some scalar $M_\sigma$ and $\zeta \in (0,1)$.
It holds for Algorithm \ref{S2QN}  almost surely that
\[
%    \lim_{k \rightarrow \infty}  \Expe\|\nabla \Psi(\te{k})\| = 0 \text{ and }
   \Expe[\Psi(\theta_k)] - \Psi_{\text{inf}} \leq \mu \nu^{k-1}, \]
 where $\nu = \max \{\zeta, 1-\frac{1}{16} c\alpha\} ,$ $\mu = \max\{\Psi (\theta_1) - \Psi_{\text{inf}}, \frac{15 M_{\sigma}}{c}\}.$
\end{theorem}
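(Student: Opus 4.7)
My plan is to establish a one-step contraction of the form
\[
a_{k+1} \leq q\, a_k + C\alpha^2 \sigma_k^2, \qquad a_k := \mathbb{E}[\Psi(\theta_k) - \Psi_{\text{inf}}],
\]
with $q = 1 - 2\rho c\alpha$ for an explicit constant $\rho$, and then to close an induction of the stated geometric form. The crucial structural point is that the factor $1 - c\alpha/16$ appearing in $\nu$ is loose enough to sit strictly above the true rate $q$, which provides just enough slack to absorb the geometrically decaying variance.

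I begin from the $L_\Psi$-smoothness descent inequality, which in the notation $M_k := (B_k + \lambda_k I)^{-1}$ and $g_k := \nabla_{\samples_g^k}\Psi(\theta_k)$ reads
\[
\Psi(\theta_{k+1}) \leq \Psi(\theta_k) - \nabla\Psi(\theta_k)^\top M_k g_k + \tfrac{L_\Psi}{2}\|M_k g_k\|^2.
\]
Two-sided spectral bounds $\tfrac{\alpha}{h\alpha+2} I \preceq M_k \preceq \alpha I$ follow immediately from $\lambda_k \in [\alpha^{-1}, 2\alpha^{-1}]$ (read off \eqref{lambda-adjust}) together with $0 \preceq B_k \preceq h I$ of Assumption~2.4. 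Since $M_k$ is $\cF_{k-1}$-measurable while Assumptions~2.2--2.3 give $\mathbb{E}[g_k \mid \cF_{k-1}] = \nabla\Psi(\theta_k)$ and conditional variance at most $\sigma_k^2$, taking conditional expectation yields
\[
\mathbb{E}[\Psi(\theta_{k+1}) \mid \cF_{k-1}] \leq \Psi(\theta_k) - \tfrac{\alpha}{h\alpha+2}\|\nabla\Psi(\theta_k)\|^2 + \tfrac{L_\Psi\alpha^2}{2}\bigl(\|\nabla\Psi(\theta_k)\|^2 + \sigma_k^2\bigr).
\]
The step-size bound $\alpha \leq r_1/(4r_2(L_\Psi+h)) \leq 1/(4(L_\Psi+h))$ forces both $h\alpha$ and $L_\Psi\alpha$ below $1/4$, so the net coefficient of $\|\nabla\Psi(\theta_k)\|^2$ is bounded above by $-\rho\alpha$ with an explicit $\rho$ (one can take $\rho = 23/72$). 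Applying the P\L\ inequality $\|\nabla\Psi(\theta_k)\|^2 \geq 2c(\Psi(\theta_k) - \Psi_{\text{inf}})$ and taking total expectation produces
\[
a_{k+1} \leq q\, a_k + \tfrac{L_\Psi \alpha^2 M_\sigma}{2}\zeta^k, \qquad q := 1 - 2\rho c \alpha,
\]
while $\alpha < 8/c$ guarantees $q \in (0,1)$.

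The main obstacle is the induction. The key inequality is
\[
\nu - q \;\geq\; (1 - c\alpha/16) - (1 - 2\rho c\alpha) \;=\; c\alpha\bigl(2\rho - 1/16\bigr) \;>\; 0,
\]
so $\nu - q$ is bounded below by a fixed positive multiple of $c\alpha$. Inductively assuming $a_k \leq \mu\nu^{k-1}$, the recursion gives $a_{k+1} \leq q\mu\nu^{k-1} + (L_\Psi\alpha^2 M_\sigma/2)\zeta^k$, and to conclude $a_{k+1} \leq \mu\nu^k$ it suffices to have $\mu(\nu - q) \geq (L_\Psi\alpha^2 M_\sigma/2)(\zeta/\nu)^{k-1}\zeta$. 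Since $\zeta \leq \nu$ the right side is at most $L_\Psi\alpha^2 M_\sigma\zeta/2$, and combining with the lower bound on $\nu - q$ and $L_\Psi\alpha \leq 1/4$ reduces the requirement to a bound of the form $\mu \gtrsim M_\sigma/c$, which is implied by $\mu \geq 15 M_\sigma/c$. The base case $a_1 \leq \mu$ is immediate from the definition of $\mu$, so the induction closes and the advertised geometric rate follows.
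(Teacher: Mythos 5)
Your proposal is correct in structure and follows essentially the same route as the paper: a one-step descent inequality, the P\L\ condition to convert the gradient term into a contraction of $\Psi(\theta_k)-\Psi_{\text{inf}}$, and an induction that closes because $\nu=\max\{\zeta,1-\tfrac{1}{16}c\alpha\}$ sits strictly above the true contraction factor, with the slack $\nu-q$ absorbing the geometrically decaying noise against $\mu\geq 15M_\sigma/c$. The only substantive difference is that the paper imports the inequality $\Expe[\Psi(\theta_{k+1})-\Psi(\theta_k)\mid\cF_{k-1}]\leq -\tfrac{1}{16}\alpha_k\|\nabla\Psi(\theta_k)\|^2+\tfrac{15}{16}\alpha_k\sigma_k^2$ wholesale from the proof of Theorem 1 (so its contraction factor is $1-\tfrac18 c\alpha$ and its noise coefficient is $\tfrac{15}{16}\alpha$), whereas you re-derive the descent step from spectral bounds on $(B_k+\lambda_k I)^{-1}$ and get different constants ($\rho=23/72$, noise coefficient $L_\Psi\alpha^2/2$). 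Two small repairs are needed. First, $M_k$ is not $\cF_{k-1}$-measurable ($H_k$ is drawn at step $k$); the cross term $\Expe[\nabla\Psi(\theta_k)^\top M_k g_k\mid\cF_{k-1}]$ factors instead by the conditional independence of $B_k$ and $g_k$ in Assumption 2.2, which gives the same bound. Second, your claim that $\alpha<8/c$ guarantees $q=1-2\rho c\alpha\in(0,1)$ is false for $\rho=23/72$ (it would require $c\alpha<36/23$; the bound $8/c$ is tailored to the paper's factor $1-\tfrac18 c\alpha$, which is positive exactly when $c\alpha<8$). Your induction step $qa_k\leq q\mu\nu^{k-1}$ implicitly uses $q\geq 0$, so as written there is a gap when $c\alpha$ is large; it is trivially patched, since if $q<0$ then $a_{k+1}\leq C\zeta^k\leq \mu\nu^k$ directly from $C\leq\mu$, but it should be patched.
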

The proof is shown in Appendix.
%\begin{remark}
%The cheap part $H_k$ depends on $\samples_H^k$ and $A_k \in \mathcal{F}_{k-1}.$
%Since $\samples_g^k$ and $\samples_H^k$ are independent, it is obvious that $B_k + \lambda_k I$ and  $\nabla_{\samples_g^k}\Psi(\theta^k)$ are $\mathcal{F}_{k-1}$-independent.
%\end{remark}
%{\color{red} Add some remark. 1. This convergence proof covers many algorithm variants, 2. Compare the different with Sub-sampled Newton Method and general Stochastic quasi-Newton method.}

%We next show the iteration complexity of our method.
%\begin{theorem}
%    Suppose that the same assumptions hold as Theorem \ref{thm:globalconv} and  
%    the step size $\alpha_k$ is set to $\alpha_k = \varsigma \cdot k^{-\upsilon}$, 
%    where $\varsigma = \min \left\{ \frac{1}{L_{\Psi}},\frac{h_1}{4h_2^2L_{\Psi}}\right\}$ and $\upsilon \in (0.5,1)$. If the variances $\sigma_k^2, \sigma_{k,+}^2$ satisfy 
%   $ \max\{ \sigma_k^2, \sigma_k^{k,+}\} \leq c \cdot k^{-\chi}$
%    with a constant $c$ and $1-\upsilon < \chi < 1$, then the number of iterations $l$ to guarantee 
%    \[\frac{1}{l}\sum_{k=1}^l\Expe\|\nabla \Psi(\te{k})\|^2 \leq \epsilon \in
%        (0,1)\] is at most 
%    $\mathcal{O}(\epsilon^{-\frac{1}{1-\upsilon}})$.
%\end{theorem}
%The proof is shown in Appendix D.

\subsection{Local Convergence}
We analyze the convergence rate of the structured quasi-Newton method in a small local neighborhood of an optimal point. 
Consider the case where $\nabla^2 \Psi(\theta) = H(\theta) + \Pi(\theta)$ in \eqref{GGN-mtx}-\eqref{GGN2-mtx} and the sequence is updated as follows:
\be
\label{local-seq}
\theta_{k+1} = \theta_k - B_k^{-1}\nabla \Psi(\theta_k),
\ee 
where $B_k = H_{S_H^k}(\theta) + \Lambda_k$ and $H_{S_H^k} = \frac{1}{|S_H^k|} \sum_{i\in S_H^k}H_i(\theta)$. The refinement matrix
$\Lambda_k$ is generated by the BFGS method and satisfies :
$
\Lambda_k u_{k-1} = v_{k-1},
$
where $u_{k-1} = \theta_k - \theta_{k-1}$ and $v_{k-1}= \frac{1}{|\mathcal{S}_H^{k-1}|} \sum_{i\in \mathcal{S}_H^{k-1}}\left ( J_f^i(\theta_{k}) - J_f^{i}(\theta_{k-1})\right) \nabla_f\ell_i(\theta_{k})$. A few assumptions are listed below:

\begin{assumption}\label{local-assump}
\begin{enumerate}
\item [4.1)] The sequence $\{ \theta_k \}$ satisfies $\sum_k \|\theta_k - \theta^*\| < \infty$ a.s. for an optimal point $\theta^*$ where $\nabla^2 \Psi(\theta^*)$ are positive definite and there exists $\widetilde \lambda > 0$ such that for $i=1,\dots, n, $
	$
	\Pi_i(\theta^*) \succeq \widetilde \lambda I.
	$
\item [4.2)] 
The gradient $\nabla_f \ell_i(\theta)$ is bounded, the Hessian $\nabla^2_f \ell_i(\theta) $ is bounded and
 Lipschitz continuous near $\theta^*$ with Lipschitz constant $L_\ell$, $\forall i=1,\dots, N$, i.e., $
\|\nabla_f \ell_i(\theta) \| \leq \kappa_\ell,$
$\|\nabla^2_f \ell_i(\theta) \| \leq \widetilde \kappa_\ell$
 and $\|\nabla^2_f \ell_i(\theta_1) - \nabla^2_f \ell_i (\theta_1) \| \leq L_\ell \|\theta_1-\theta_2\|,$ for any $\theta_1, \theta_2$ near $\theta^*.$
\item [4.3)] The gradient $\nabla f_j^i(\theta)$ is bounded, the Hessian $\nabla^2 f_j^i(\theta)$ is bounded and Lipschitz continuous near $\theta^*$ with Lipschitz constant $L_f,$ $\forall i=1,\dots,N$ and $\forall j = 1,\dots,m$, i.e., $\|\nabla f_j^i(\theta)\| \leq \kappa_f$, $\|\nabla^2 f_j^i(\theta)\|\leq \tilde \kappa_f$ and $\|\nabla^2 f_j^i(\theta_1) - \nabla^2 f_j^i(\theta_2) \| \leq L_f \|\theta_1-\theta_2\|
$ for any $\theta_1, \theta_2$ near $\theta^*.$
\end{enumerate}
\end{assumption}
The above assumptions are common in the structured quasi-Newton method for deterministic nonlinear least squares problems \cite{zhou2010global}. To establish the fast local convergence results for stochastic methods, the sample size is required to increase superlinearly. These assumptions are similar to these in the subsampled Newton method  \cite{RooMah19} and stochastic BFGS method \cite{zhou2017stochastic}. We summarize the results as follows: 
\begin{theorem}
Suppose that Assumption \ref{local-assump} is satisfied. If the sample size $|S_H^k|$ increases superlinearly,
then the sequence $\{\theta_k\}$ generated by \eqref{local-seq} converges to $\theta^*$ superlinearly almost surely.
\end{theorem}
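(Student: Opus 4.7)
The plan is to reduce the superlinear convergence claim to a Dennis--Moré-type condition adapted to the structured stochastic setting. Assumption 4.1 already provides almost sure convergence $\theta_k \to \theta^*$ with summable distances, so by writing
$$\theta_{k+1} - \theta^* = B_k^{-1}\bigl[(B_k - \nabla^2\Psi(\theta^*))(\theta_k - \theta^*) + R_k\bigr],$$
where $R_k$ is the second-order Taylor remainder of $\nabla\Psi$ at $\theta^*$, it suffices to show that $B_k$ stays uniformly positive definite near $\theta^*$ and that
$$\lim_{k\to\infty} \frac{\|(B_k - \nabla^2\Psi(\theta^*))\, s_k\|}{\|s_k\|} = 0 \quad \text{a.s.},$$
with $s_k = \theta_{k+1} - \theta_k$. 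Uniform positive definiteness follows because $H_{\mathcal{S}_H^k}(\theta_k)$ is PSD (the loss $\ell$ being convex in $f$), $\Lambda_k$ is kept PD via the damping strategy, and the hypothesis $\Pi_i(\theta^*) \succeq \widetilde\lambda I$ propagates to a uniform lower bound on $B_k$ once sampling and BFGS errors are small.

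I would then split the error as
$$B_k - \nabla^2\Psi(\theta^*) = \bigl(H_{\mathcal{S}_H^k}(\theta_k) - H(\theta_k)\bigr) + \bigl(H(\theta_k) - H(\theta^*)\bigr) + \bigl(\Lambda_k - \Pi(\theta^*)\bigr).$$
The middle term vanishes by continuity of $H$ (guaranteed by Assumptions 4.2--4.3) together with $\theta_k \to \theta^*$. For the first, I would apply a matrix Bernstein inequality, whose variance proxy is bounded using $\|\nabla_f\ell_i\|\le\kappa_\ell$ and $\|\nabla^2 f_j^i\|\le\widetilde\kappa_f$, combined with the superlinear growth of $|\mathcal{S}_H^k|$ and a Borel--Cantelli argument, to get $\|H_{\mathcal{S}_H^k}(\theta_k) - H(\theta_k)\| \to 0$ almost surely at a superlinear rate.

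The third term is the structured BFGS piece and would be handled in the spirit of the Zhou--Li analysis for deterministic structured quasi-Newton on nonlinear least squares. A Taylor expansion of $J_f^i$ together with the bounds on $\nabla_f\ell_i$ and the Lipschitz hypothesis on $\nabla^2 f_j^i$ gives
$$v_{k-1} = \Pi_{\mathcal{S}_H^{k-1}}(\theta^*)\, u_{k-1} + O(\|u_{k-1}\|^2) + \varepsilon_k^{\mathrm{stoch}},$$
where $\varepsilon_k^{\mathrm{stoch}}$ is controlled by the sampling concentration argument above. Thus $\Lambda_k$ is effectively BFGS applied to noisy secant data for the deterministic target $\Pi(\theta^*)$. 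Applying the classical Broyden--Dennis--Moré trace/log-determinant potential argument to this perturbed update then yields $\|(\Lambda_k - \Pi(\theta^*))\, s_k\|/\|s_k\| \to 0$ a.s., and combining the three error contributions gives the desired Dennis--Moré limit, hence Q-superlinear convergence.

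The main obstacle will be the rigorous almost-sure treatment of the perturbed BFGS potential argument: the classical proof crucially exploits an exact secant relation $\Lambda_k u_{k-1} = \Pi(\theta^*) u_{k-1}$ (up to deterministic $O(\|u_{k-1}\|^2)$ slack), whereas here we only have this up to a stochastic error whose size is dictated by $|\mathcal{S}_H^k|$. Showing that the monotone decrease of the $\psi$-potential is preserved in a summable-error sense, so that the Dennis--Moré limit survives the stochastic perturbation almost surely, is precisely where the superlinear sample-growth hypothesis is indispensable and where the proof will require the most care.
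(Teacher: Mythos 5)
Your proposal follows essentially the same route as the paper's proof: the identical three-term decomposition of $B_k - \nabla^2\Psi(\theta^*)$, matrix Bernstein plus Borel--Cantelli with superlinearly growing $|S_H^k|$ for the sampling error, a perturbed bounded-deterioration/potential argument (the paper uses the Frobenius-norm potential of Zhou et al.\ and Griewank via two hypothetical BFGS sequences built from $\Pi_{S_H^k}(\theta^*)$ and $\Pi(\theta^*)$, rather than the trace/log-determinant potential you name, but this is the same mechanism) to get the stochastic Dennis--Mor\'e condition, and then the standard Dennis--Mor\'e argument for Q-superlinear convergence. The difficulty you flag---propagating the potential decrease through summable stochastic secant errors---is exactly the step the paper resolves with conditions a.1.1)--a.1.3) and the $O(\epsilon_k + e_k)$ perturbation bounds.
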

The proof is shown in Appendix. Note that we do not consider the stochastic mini-batch gradient in \eqref{local-seq}. However, similar results can be established by adding assumptions on the mini-batch gradient and its sample size.

\section{Numerical Experiments} \label{sec:num}
In this section, we compare our proposed method with a few standard methods for
logistic regression, deep autoencoders and convolutional neural networks. All methods
used in the experiments are briefly listed below and their detailed implementation is
reported in Appendix. 
%For different applications, the compared methods are the subset. Our proposed framework is based on how to design $H_k$ (the cheap and accessible part) and two main variants are S4QN and SKQN.
\textbf{SGD} is the stochastic gradient method with momentum. \textbf{Adam} \cite{kingma2014adam} is an adaptive gradient method.
  \textbf{L-BFGS} \cite{nocedal2006numerical} is the well-known limited-memory quasi-Newton method.
\textbf{SSN} \cite{RooMah19} is the subsampled Newton method.
 \textbf{KFAC} \cite{grosse2016kronecker} is a method using the Fisher matrix for deep learning problems.
 \textbf{S4QN} is the variants of S2QN where the base matrix is
 subsampled Newton matrix. \textbf{SKQN-L}, \textbf{SKQN-B1} and \textbf{SKQN-B2} are the variants of S2QN when the base matrices are the empirical version of KFAC matrices but with different refinement matrix constructions \eqref{eq:L-BFGS}, \eqref{DL-cond1} and \eqref{DL-cond2}, respectively. 

%In the numerical comparison, the method name ``Hybrid'' means the method we proposed in section~(\ref{Hybrid-Method}) without extra-step. And ``Hybrid-SGD'' and ``Hybrid-KFAC'' means using SGD or KFAC as the extra-step, respectively.
\subsection{Logistic Regression}
%\subsubsection{Logistic Regression(LR)}
In this part, we consider logistic regression problems for binary classification:
\begin{equation}
\label{eq:LR} \min_{\theta \in \mathbb{R}^n}  \Psi(\theta) = \frac{1}{N}\sum_{i=1}^{N} \log(1+\exp(-y_i \iprod{x_i}{\theta}) + \mu\|\theta\|^2,
\end{equation}
where $\{x_i,y_i\} \in \mathbb{R}^n \times \{-1,1\}$, $i\in [1, 2, \dots, N]$ correspond to a given dataset.
The statistics of the datasets used in our numerical comparisons are listed in
Appendix.

%The compared algorithms and corresponding implementation details are listed below.
We compare  S4QN with SGD, SSN and L-BFGS.
Let one epoch be a full pass through the dataset and define the relative error as $\text{rel-err}:= (\Psi(\theta) - \Psi^{*})/\max\{1,\Psi^*\}$, where $ \Psi^{*}$ is the optimal function value. 
 The changes of the relative error with respect to the number of epochs for
 {rcv1} and {news20} is shown in
Figure \ref{LR:epochversusfunvalue1}.

We can observe that S4QN outperforms other methods greatly. It is worth
emphasizing that all settings of S4QN are the same as SSN except an additional quasi-Newton matrix.  The sample size of the gradient estimation in S4QN and SSN is increasing until the full batch. 
%and it becomes the full gradient in the last stage. 
%The advantage of S4QN method over SSN can be seen clearly.
% even in the initial stage where the quasi-Newton matrix has more noise due to the stochasticity of the gradient estimation.
S4QN exhibits a faster local convergence rate than other methods.
These
facts illustrate that the structured methods can accelerate the Hessian-based and quasi-Newton  methods.

 \begin{figure}[h]
%\vskip 0.2in
	\vskip -0.2in
\begin{center}
	\begin{tabular}{cc}
		\subfloat[rcv1]{
\includegraphics[width=0.22\textwidth]{./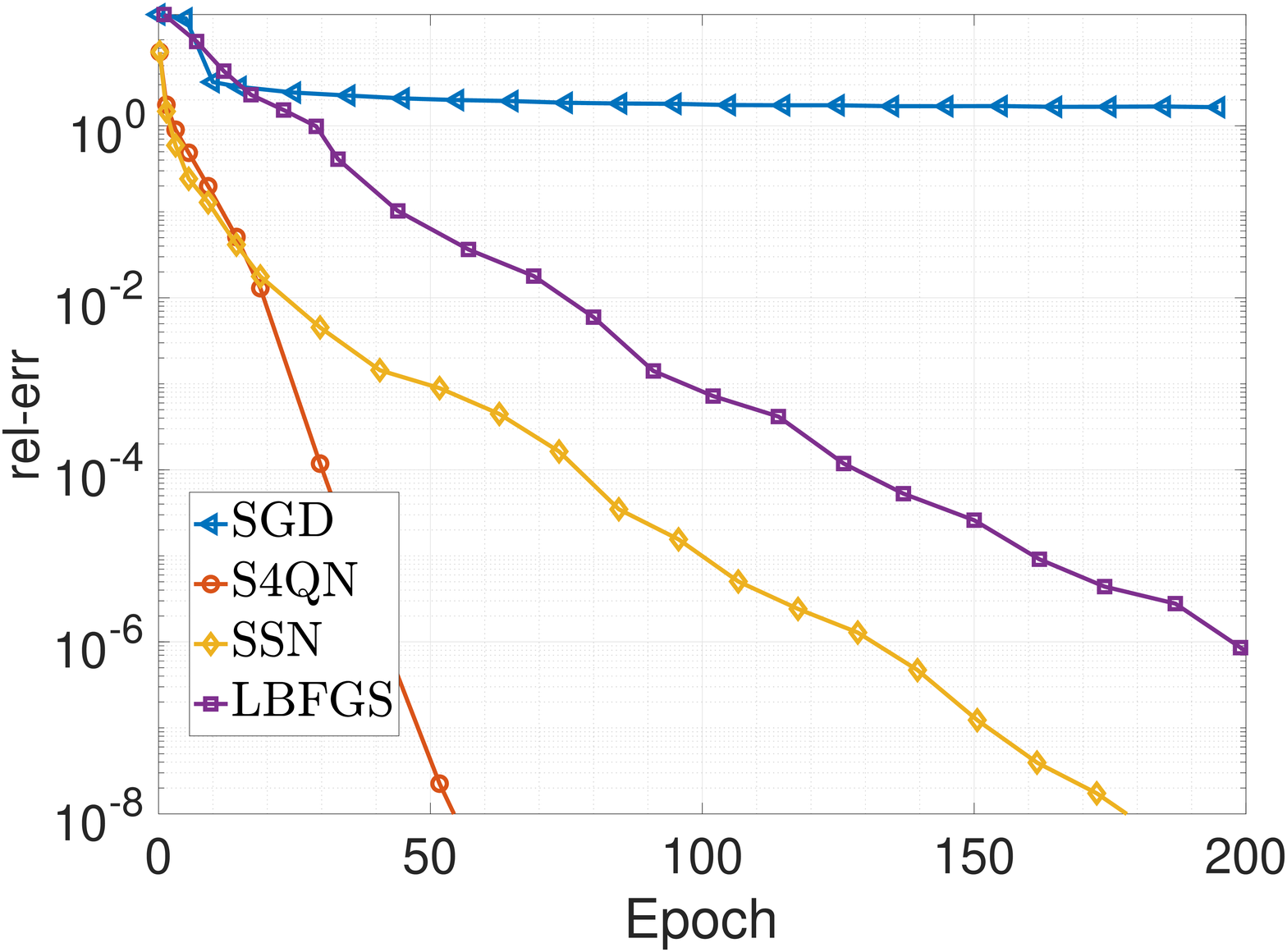}}
\subfloat[news20]{
\includegraphics[width=0.22\textwidth]{./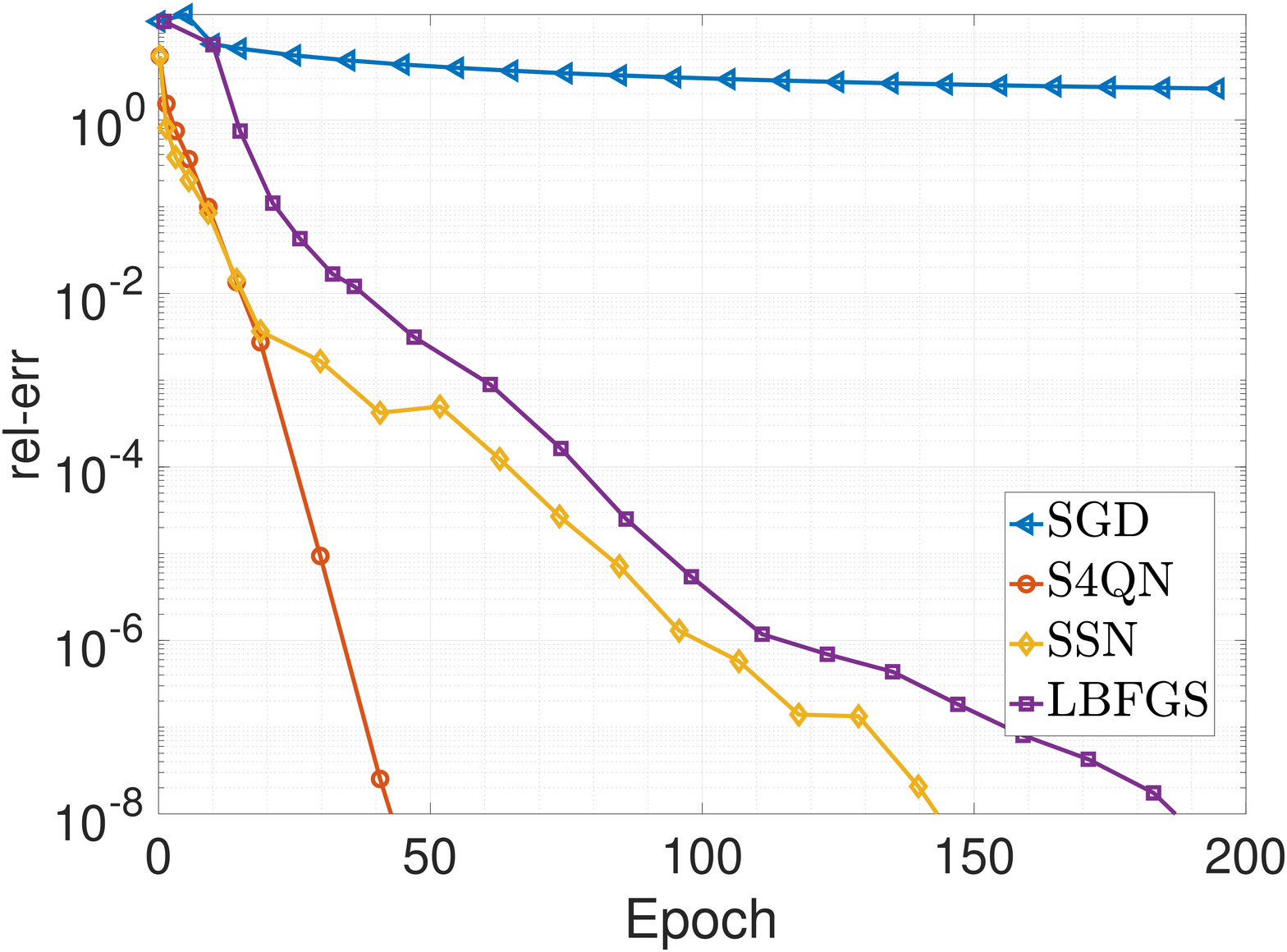}}\\
	\end{tabular}
\caption{Logistic Regression.}
\label{LR:epochversusfunvalue1}
\end{center}
\vskip -0.3in
\end{figure}

\subsection{Deep Autoencoders}
We next consider the deep autoencoder problem \cite{deep-auto-encoder} on three datasets: ``MNIST", ``CURVES" and ``FACES". The network architecture is $D$-1000-500-250-30-250-500-1000-$D$, where $D$ is the
dimension of the input data. We use the cross-entropy loss for CURVES and MNIST,
and the square error loss for FACES.
We compare SKQN-L, SKQN-B1and SKQN-B2 with SGD, ADAM and KFAC. We report the changes of the training loss and testing loss versus epochs in Figure \ref{AE:epochversusfunvalue}.

%We report the training error rather than the testing error since we are more interested in the optimization convergence speed than the generalization of neural network.
\begin{figure}[ht]
	\vskip -0.4in
	\begin{center}
		\begin{tabular}{cc}
		\subfloat[MNIST: Training Loss]{
\includegraphics[width=0.22\textwidth]{./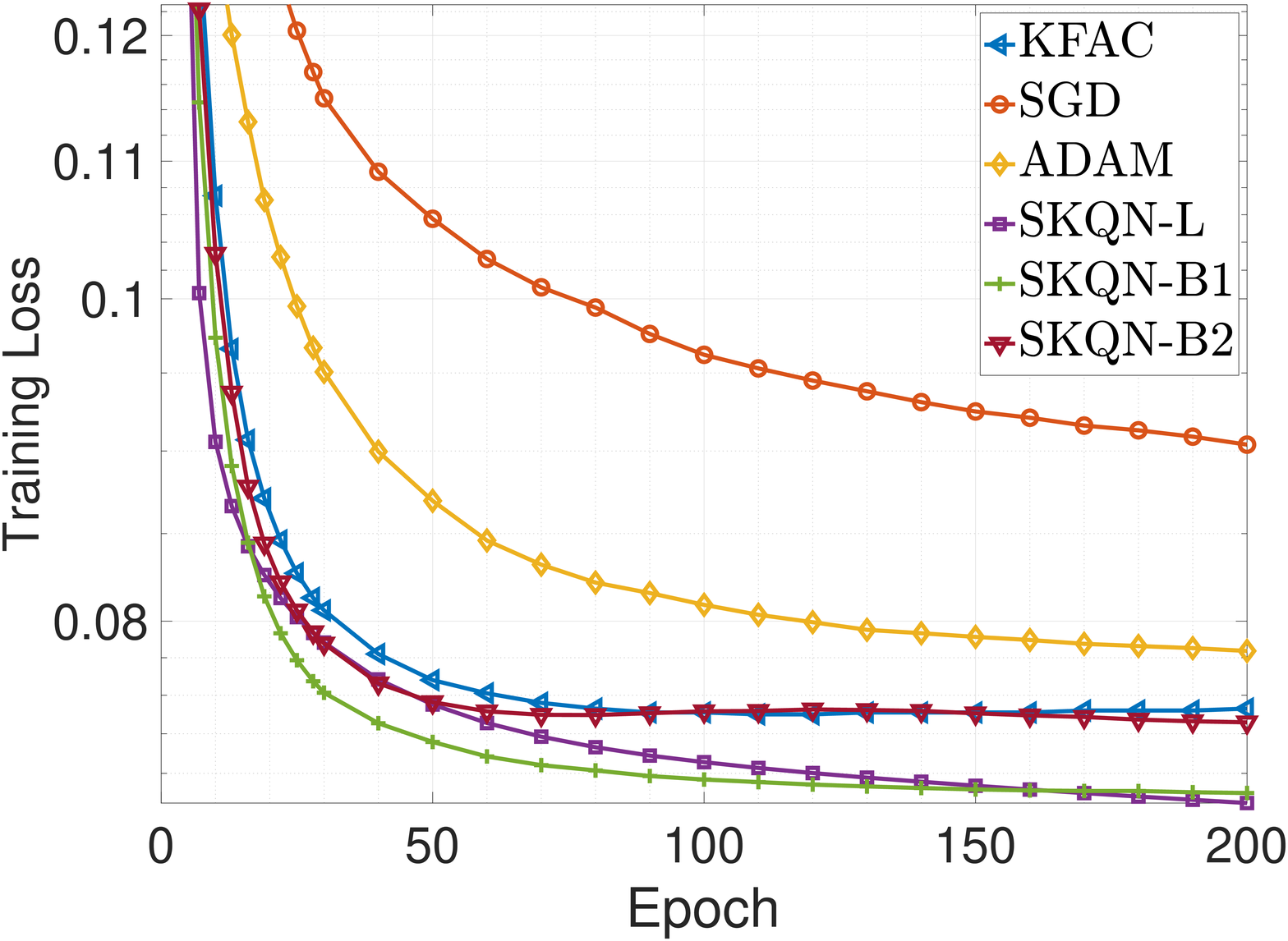}}
\subfloat[MNIST: Testing Loss]{
\includegraphics[width=0.22\textwidth]{./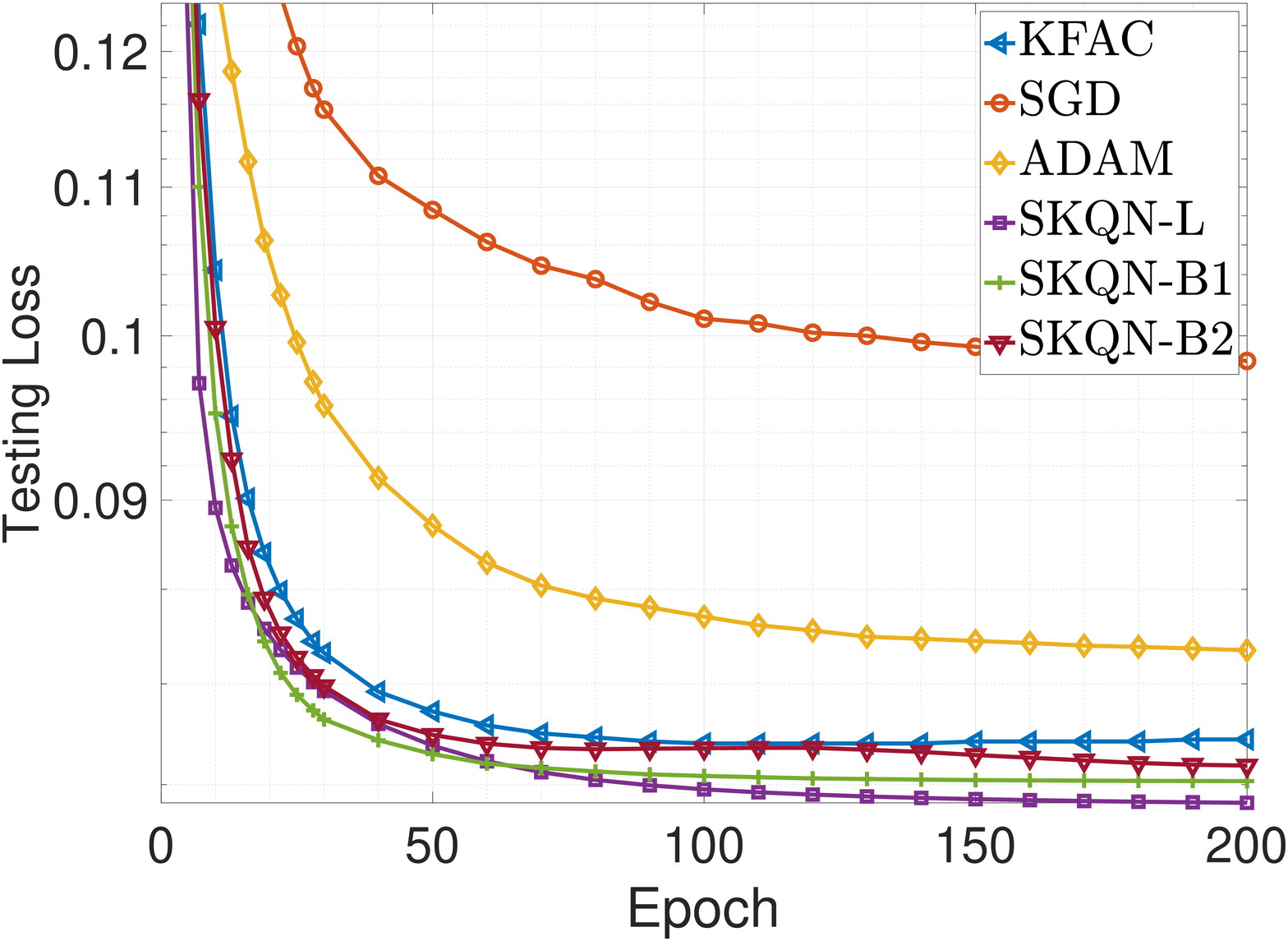}}\\
\subfloat[CURVES: Training Loss]{
\includegraphics[width=0.22\textwidth]{./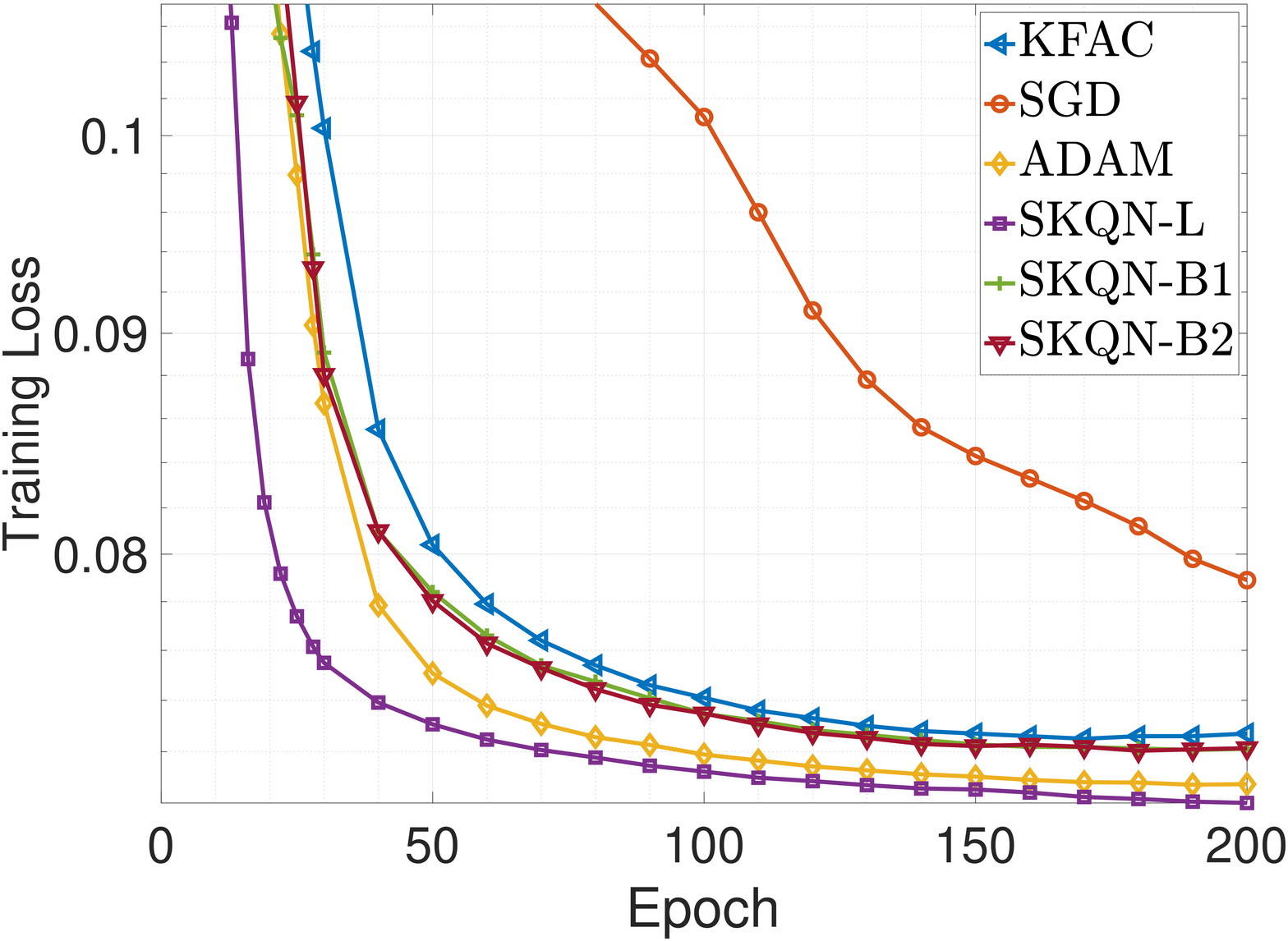}}
\subfloat[CURVES: Testing Loss]{
\includegraphics[width=0.22\textwidth]{./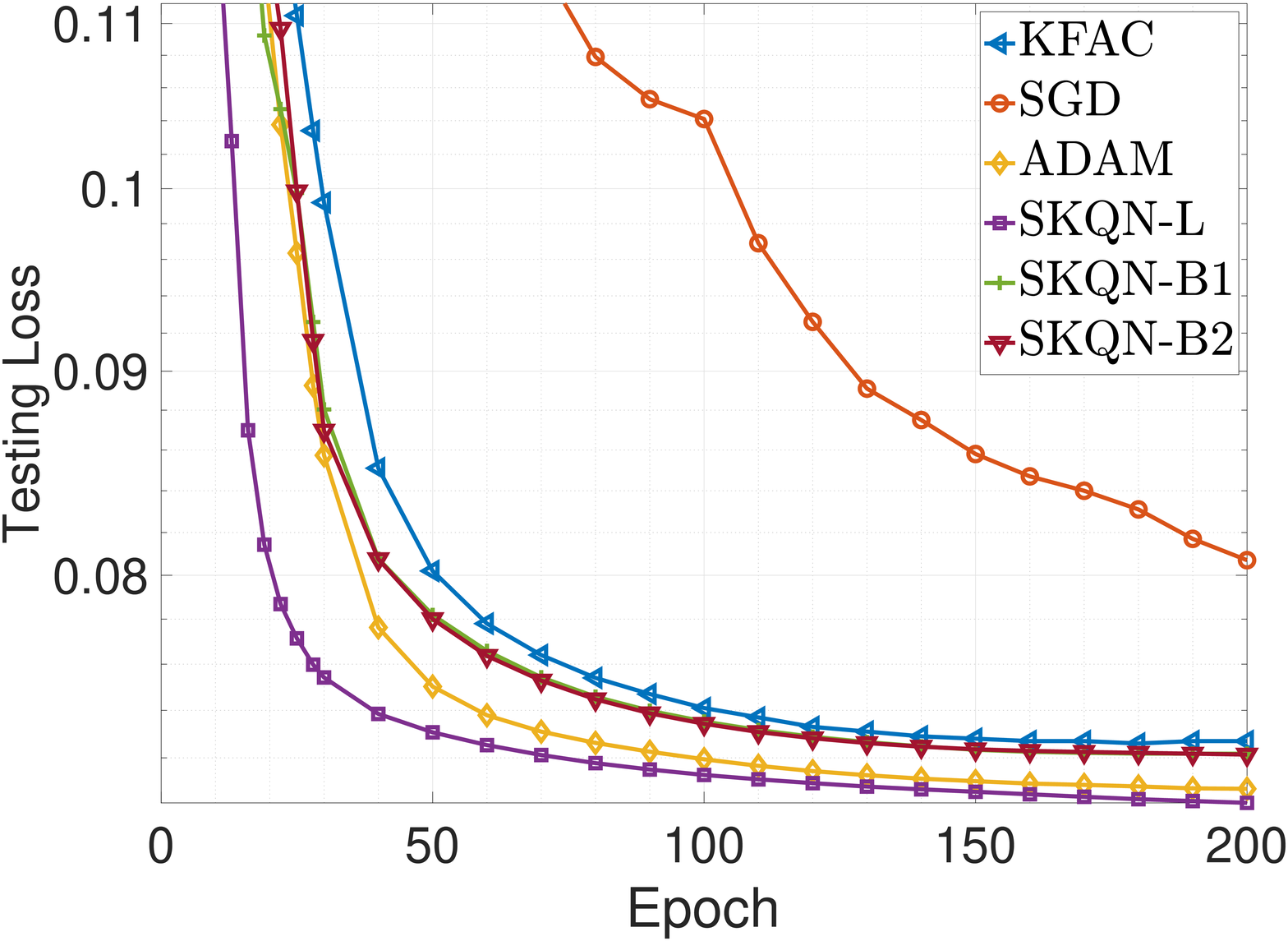}}\\
\subfloat[FACES: Training Loss]{
\includegraphics[width=0.22\textwidth]{./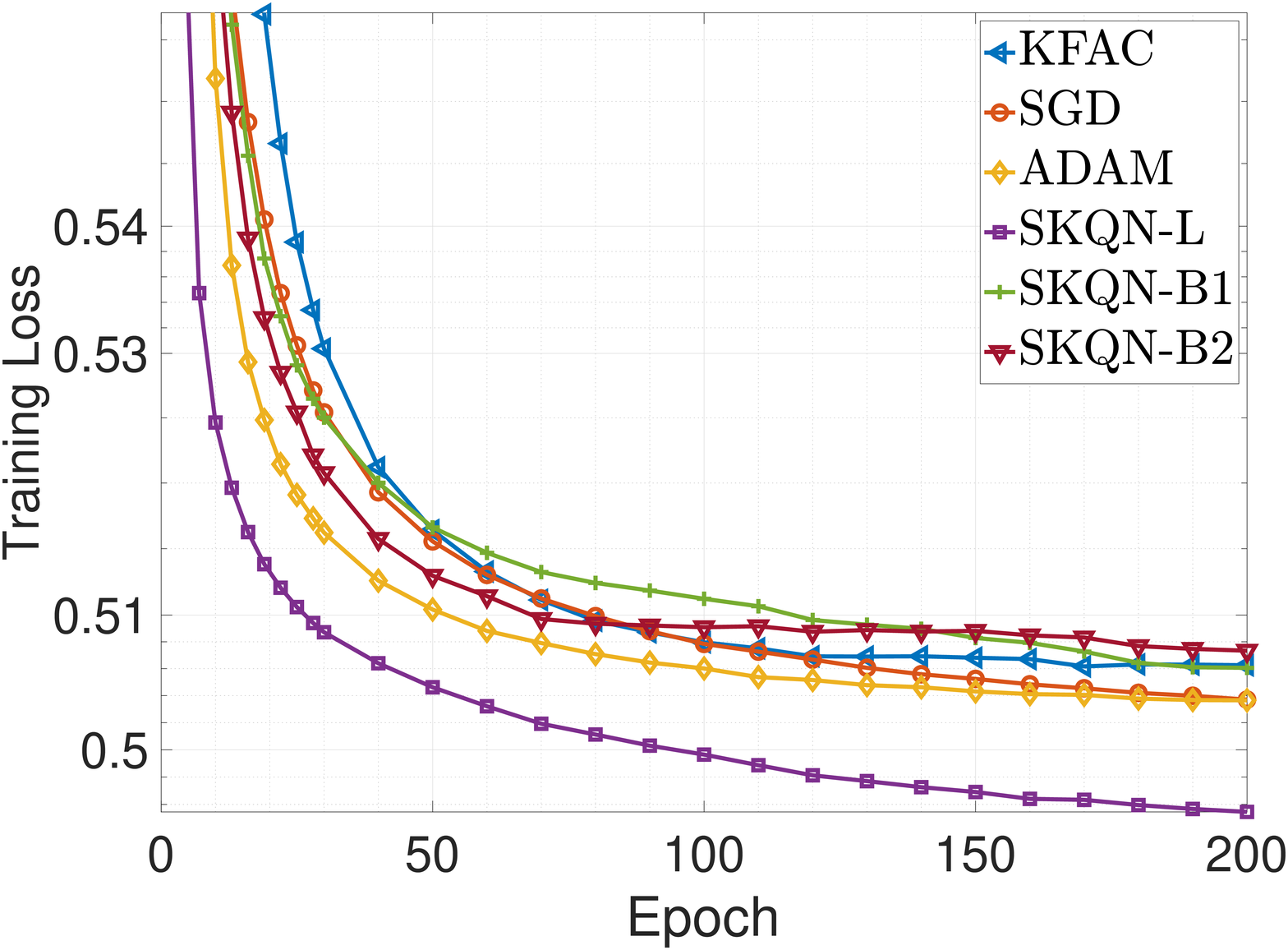}}
\subfloat[FACES: Testing Loss]{
\includegraphics[width=0.22\textwidth]{./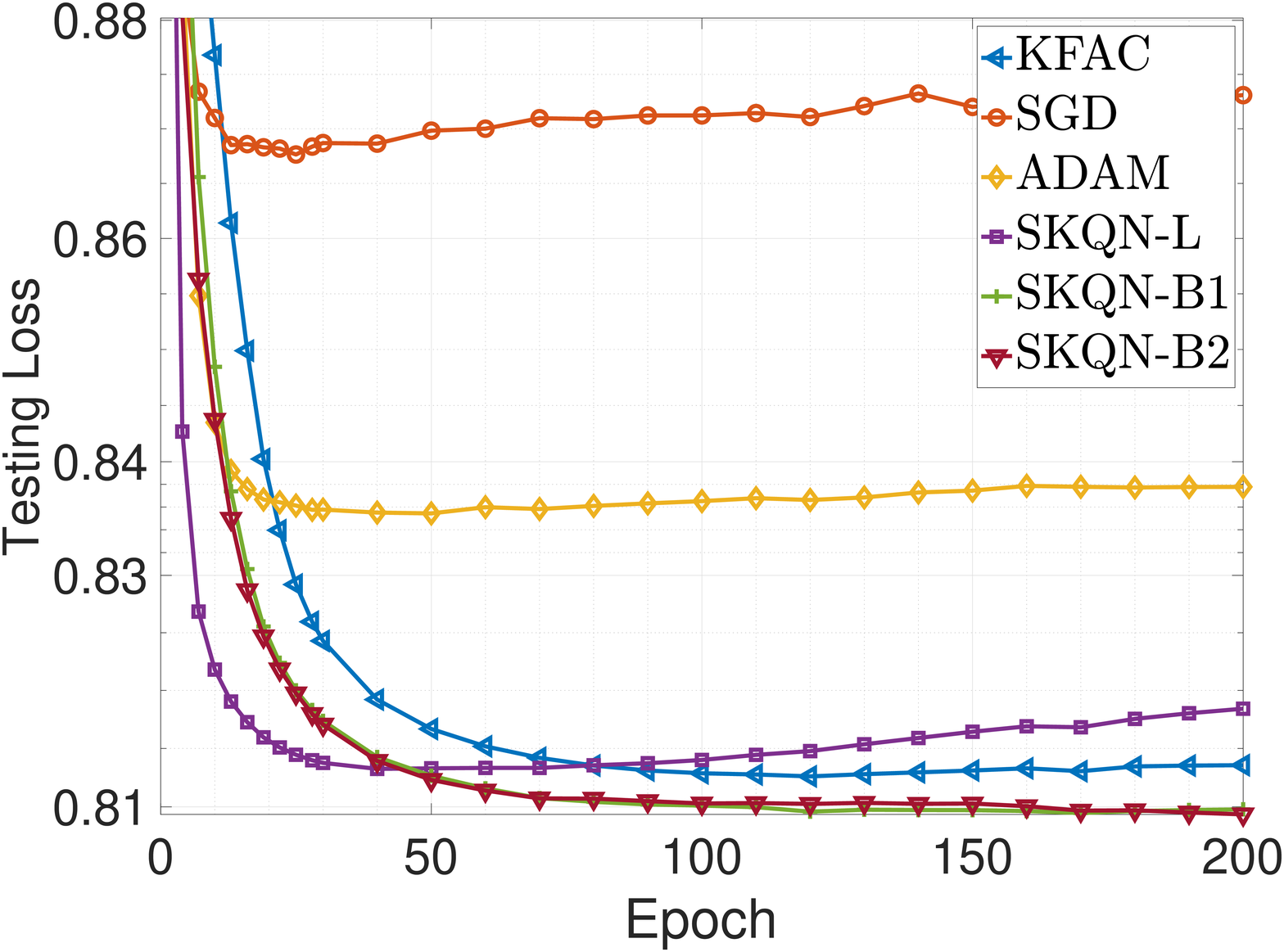}}
	\end{tabular}
    \caption{Autoencoders.}
	\label{AE:epochversusfunvalue}
	\end{center}
		\vskip -0.3in
\end{figure}

Compared to the first-order methods SGD and Adam, our structured quasi-Newton methods are better and
 more stable on all the three datasets. In comparison to KFAC, our proposed methods
 have improvements in both the training loss and testing loss while the computational cost does not
 increase  much at each iteration. Our results suggest that the structured
 quasi-Newton method with partial Hessian information indeed accelerates the
 convergence.

\subsection{ConvNet}
A 4-layer neural network ``ConvNet'' is considered in this part: three convolutional layers followed by a fully-connected layer. The detailed network architecture can be found in Appendix.
We compare algorithms with ``ConvNet'' on the CIFAR10 which is a standard dataset used for numerical performance comparison in deep learning.
% and consists of 50, 000 training and 10, 000 testing images. 

The changes of the training loss and testing accuracy versus epochs are reported in Figure \ref{CIFAR10-ConvNet}. It is observed that the second-order type method is superior to the Adam and SGD. Our proposed methods have smaller training loss and larger testing accuracy in the end.
\begin{figure}[h]
\begin{center}
	\vskip -0.4in
	\begin{tabular}{cc}
		\subfloat[Training Loss]{
\includegraphics[width=0.22\textwidth]{./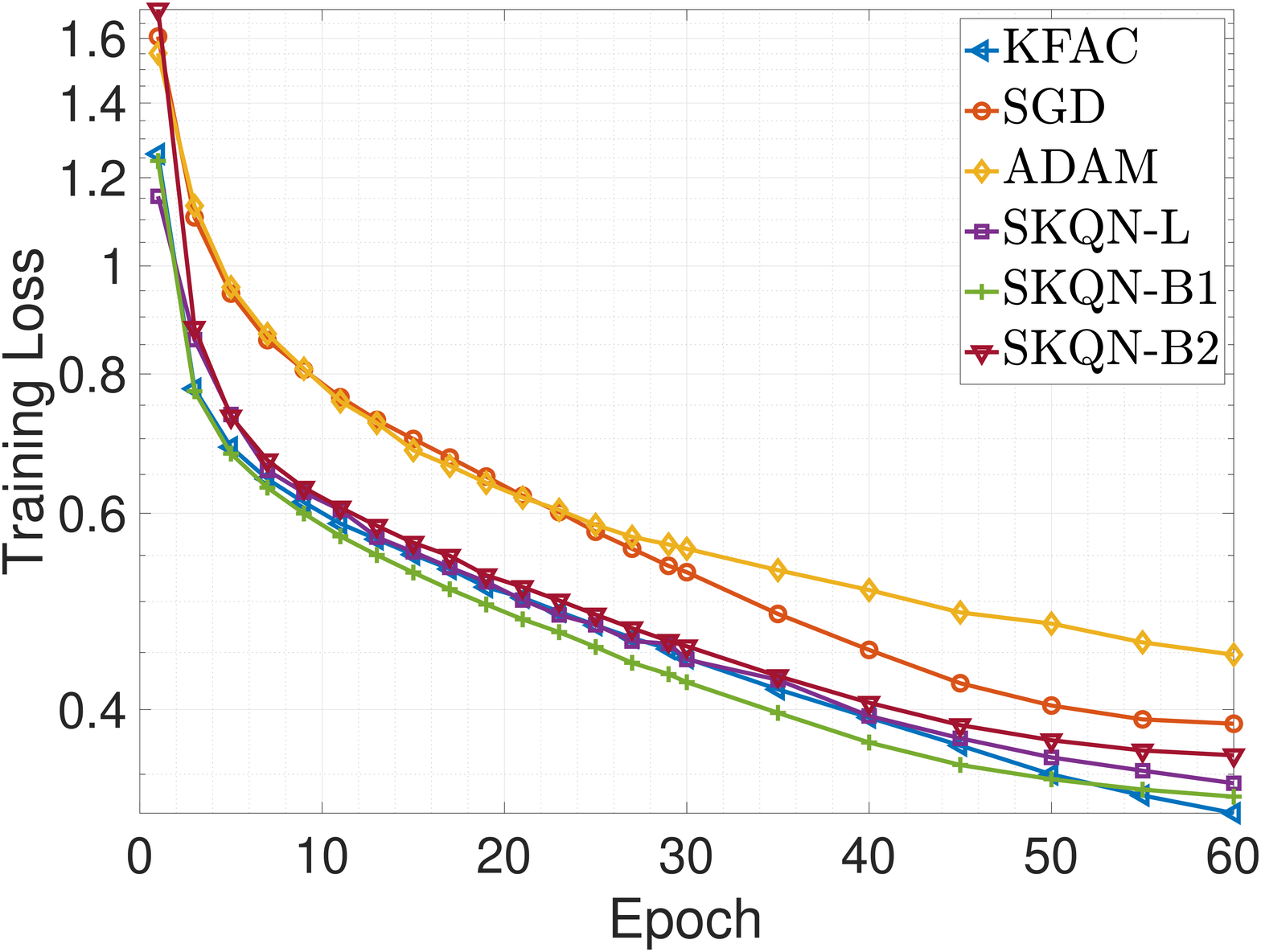}}
			\subfloat[Testing Accuracy]{
\includegraphics[width=0.22\textwidth]{./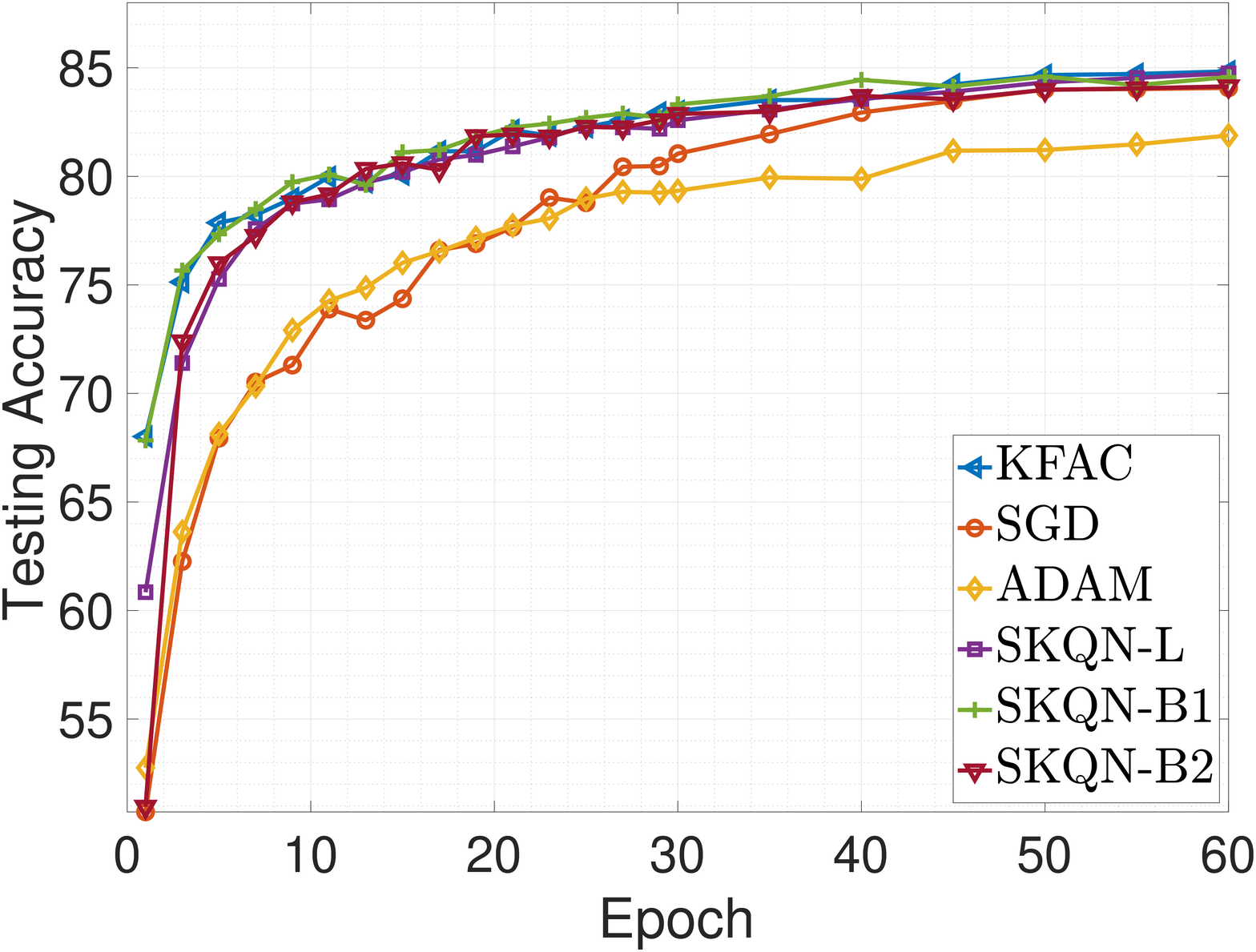}}\\
%\subfloat{
%\includegraphics[width=0.22\textwidth]{./figure/sec1_TrainLoss_cifar10_ed1.eps}}
%			\subfloat{
%\includegraphics[width=0.22\textwidth]{./figure/sec1_TestAccu_cifar10_ed1.eps}}
		\end{tabular}
	\caption{ConvNet on CIFAR-10.}
	\label{CIFAR10-ConvNet}
	\end{center}
		\vskip -0.3in
\end{figure}

\subsection{ResNet-18}
In this part, we consider the ``ResNet-18'' \cite{he2016deep} with the cross-entropy loss on the dataset
``CIFAR-10".  ResNet is a well-known network and widely used in practice.%We compare our proposed method with standard optimization method in deep learning, that is, SGD, Adam and K-FAC. For Adam and SGD, the hyper-parameters, such as initial learning rate, $\beta_1$ and $\beta_2$, are decided by the same process as deep autoencoder experiments. The setting of K-FAC and our proposed methods is the same as the parameters used in deep autoencoder experiments. For CIFAR-10, the batch size is set to 256 and the learning rate is decreased by a factor of 0.1 every 30 epochs. And the batch size is set to 64 in ImageNette and the learning rate is decreased by a factor of 0.87 every epoch.

The changes of the training loss and testing accuracy versus
epoch of CIFAR-10 are reported in Figure \ref{CIFAR10-deepCNN}. We can see that the second-order type
methods outperform the first-order type methods in terms of both criteria. The training error of our proposed quasi-Newton methods decreases
faster than that of KFAC. In terms of the testing accuracy, our proposed methods are better at start and at least comparable with KFAC in the end.
\begin{figure}[ht]
\begin{center}
   	\vskip -0.2in
	\begin{tabular}{cc}
		\subfloat[Training Loss]{
\includegraphics[width=0.22\textwidth]{./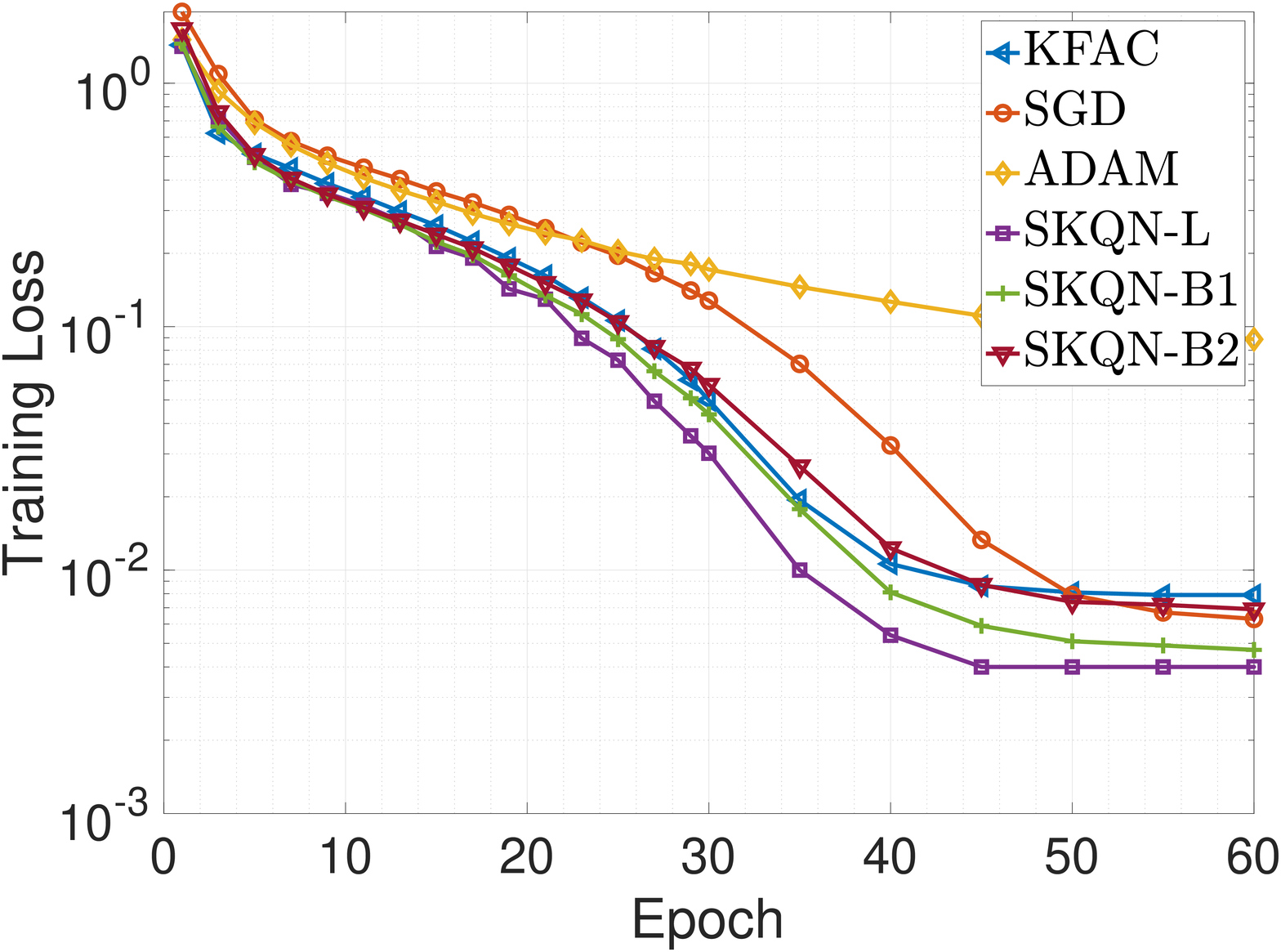}}
			\subfloat[Testing Accuracy]{
\includegraphics[width=0.22\textwidth]{./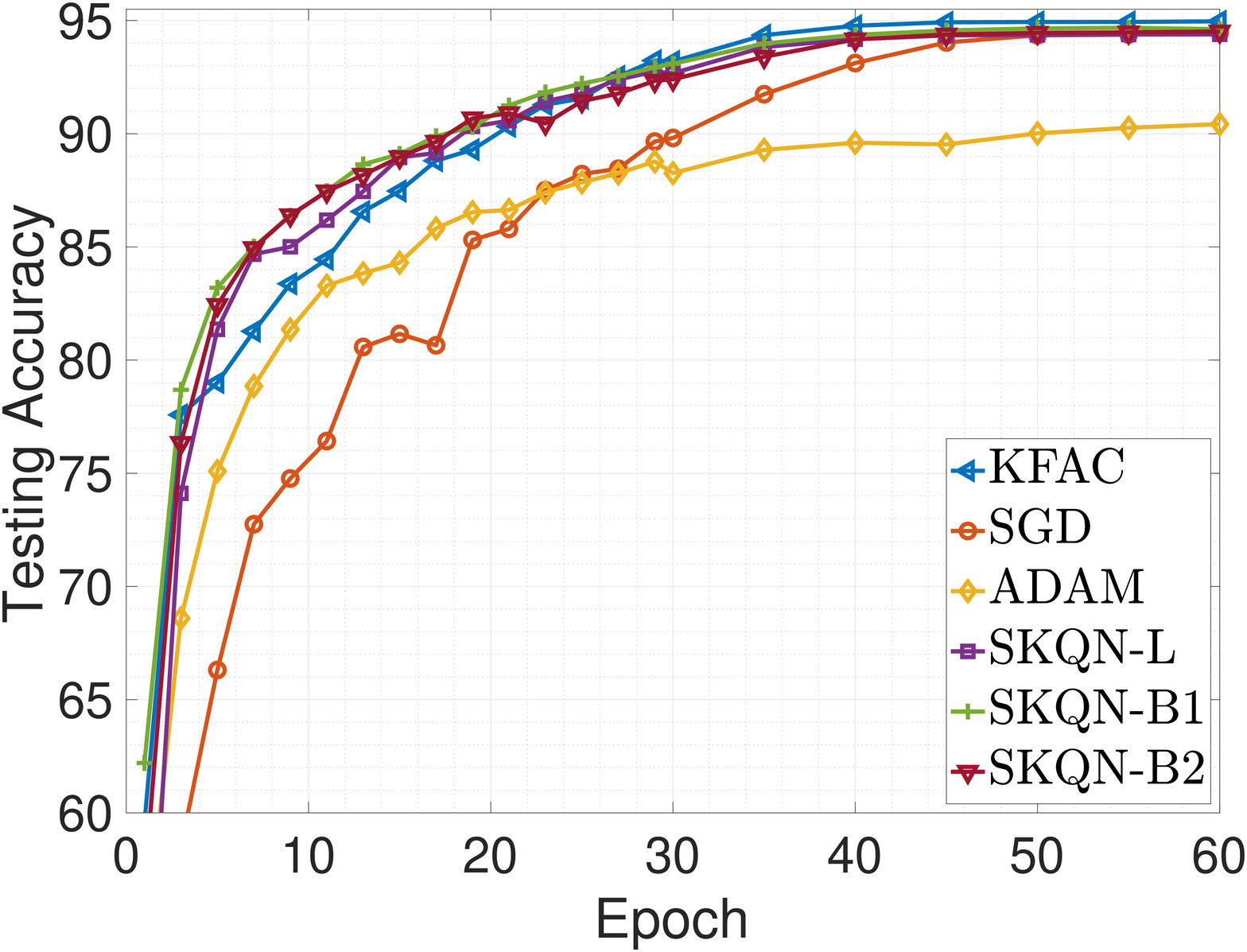}}
		\end{tabular}
	\caption{ResNet-18 on CIFAR-10.}
	\label{CIFAR10-deepCNN}
		\vskip -0.4in
	\end{center}
		\vskip -0.4in
\end{figure}
\section{Conclusion}
In this paper, a novel S2QN framework is proposed
and analyzed for large-scale finite-sum optimization problems.  Since the Hessian matrix can be split as a cheap part and an expensive part, we use the structured quasi-Newton method to exploit more curvature information for the expensive part. By further exploiting either the low-rank structure or the kronecker-product properties of the approximations, the computation of the quasi-Newton direction is affordable.
We obtain global convergence if the step sizes and the stochastic variances satisfy certain conditions. A local superlinear convergence result is also guaranteed under mind conditions. Our experimental results demonstrate the effectiveness of our structured stochastic quasi-Newton method compared to the state-of-the-art methods. In the future, we will implement our method on MindSpore\footnote{https://gitee.com/mindspore/}, a unified training and inference framework for device, edge and cloud in Huawei’s full-stack, all-scenario AI portfolio.
%\vspace{-1ex}
%We plan to implement our method on MindSpore\footnote{https://gitee.com/mindspore/}, a unified training and inference framework for device, edge and cloud in Huawei's full-stack, all-scenario AI portfolio.

\textbf{Acknowledgments}  M. Yang, D. Xu and Z. Wen are supported in part by Key-Area Research and Development Program of Guangdong Province (No.2019B121204008),  the NSFC grants 11831002 and Beijing Academy of Artificial Intelligence.
{\small
\bibliographystyle{ieee_fullname}
\bibliography{ref}
}

\clearpage
\onecolumn
\begin{center}
%{Supplementary Material: \\Enhance Curvature Information by Structured Stochastic Quasi-Newton Methods}
\large{\bf \emph{Supplementary Material: \\Enhance Curvature Information by Structured Stochastic Quasi-Newton Methods}}
\end{center}
\section*{Implementation Details}
\subsection*{Logistic Regression}
\begin{itemize}
\item The objective function considered in this part is:
\bee
\label{eq:LR} \min_{\theta \in \mathbb{R}^n}  \Psi(\theta) = \frac{1}{N}\sum_{i=1}^{N} \log(1+\exp(-y_i \iprod{x_i}{\theta}) + \mu\|\theta\|_2^2,
\eee
where $\{x_i,y_i\} \in \mathbb{R}^n \times \{-1,1\}$, $i\in [1, 2, \dots, N].$ 
\item A description of the datasets is shown in Table \ref{LRdataset}.
\begin{table}[h]
\centering
\begin{tabular}{ccc}%{|p{10ex}p{8ex}p{8ex}|}
\toprule
Dataset & \# Data points $N$ & \# Dimension  $n$ \\ %&sp. (\%) \\
\midrule
{rcv1} 	& 20, 242 		& 47, 236 \\ %	& 99.84 \\
{news20} 	&19, 996		& 1, 355, 191 \\
\bottomrule
\end{tabular}
 \caption{A description of the datasets in logistic regression.}
\label{LRdataset}
\end{table}

\item We describe the implementation details of the algorithms used in this part.
\begin{itemize}
\item SGD: The batch size is set to be $1$.
\item {L-BFGS}: The source code is downloaded from the website $\footnote{\url{https://www.cs.ubc.ca/~schmidtm/Software/minFunc.html}}$ and the default parameters are used.
\item {SSN}: %The update of the regularization parameters are described in section (\ref{DampingStrategy}).
    The batch size $\samples_H$ for the subsampled Hessian matrix is
    $\min\{2000,\lfloor 0.01N \rfloor\}$. The batch size of the subsampled
    gradient $|\samples_g|$ is changing as
$ \min\{|\samples_g| \cdot1.1,N\}$.
\item {S4QN}: The set up of the subsampled Hessian $H_k$ is
    the same as SSN. The matrix $\Lambda_k$ is generated by the stochastic L-BFGS
    method and the memory size is $5$.
\end{itemize}
\end{itemize}

\subsection*{Deep Learning}
We now present the detailed implementation for the deep learning problems. The batch size for all methods is the same, i.e., $512$ for Autoencoders and $256$ for CNNs (ConvNet and ResNet-18). The hyper-parameters of Adam for all three architectures are tuned by using the grid search as follows.
\begin{itemize}
\item The initial learning rate is from \{3e-2, 1e-2, 3e-3, 1e-3, 3e-4, 1e-4\}. 
\item The parameters $\beta_1$ and $\beta_2$ are tuned in \{0.9,0.99\} and $\{0.99,0.999\}$, respectively. 
\item The perturbation value $\epsilon$ is 1e-8.
\end{itemize}
The hyper-parameters of other methods are tuned for their best numerical performance depending on the network architectures. We list the experimental settings and tuning mechanisms into two parts, Autoencoders and CNNs (including ConvNet and ResNet-18), respectively.

\subsubsection*{Autoencoders}
\begin{itemize}
\item Autoencoders are fully-connected neural networks. We test autoencoders on three datasets. The corresponding information is reported in Table \ref{AE:datasets}.

\item We describe the implementation details of the algorithms used in autoencoders.
\begin{itemize}
\item {SGD:} The stochastic gradient method with momentum $0.9$. The weight decay is set to be $10^{-5}$ and the learning rate is fixed to be the best one from $\eta_0 \in \{0.01, 0.02, 0.05, 0.1, 0.2, 0.5, 1, 2, 5\}$.
\item {KFAC:} The learning rate is set to $\eta =
\eta_0 \widehat \beta^{\mathrm{epoch}}$.
$\eta_0$ and $\widehat \beta$ is determined through grid search from $\eta_0 \in \{0.3, 0.5, 1\}$ and $\widehat \beta \in \{0.99, 1\}$.
The damping and the momentum parameter are set to be $0.2$ and $0.9$, respectively.
\item {SKQN-L:} The learning rate is set to $1$ in autoencoder for MNIST and FACES, $1.5$ for CURVES. The parameter $\gamma_k$ is set to $0.2 \times (\mathrm{epoch})^{0.99}$. The momentum is set to be 0.9 and the memory size is $5$. 
\item {SKQN-B1/SKQN-B2:} The learning rate is set to $0.7$ in autoencoder for MNIST, $0.4$ for FACES and $0.8$ for CURVES.  The damping is $\gamma_0 \times (\mathrm{epoch})^{0.99}$ with $\gamma_0=0.1$ for MNIST and CURVES, $0.2$ for FACES. The BFGS damping is set to be $0.5$ and the momentum is 0.9.
%\item {SKQN-B2:} The learning rate is set to $0.7$ in autoencoder for MNIST, $0.4$ for FACES and $0.8$ for CURVES. The damping is $\gamma_0 \times (\mathrm{epoch})^{0.99}$ in autoencoders with $\gamma_0=0.1$ for MNIST and CURVES, $0.2$ for FACES. The BFGS damping is set to be $0.5$ and the momentum is 0.9.
\end{itemize}
\end{itemize}
\begin{table}[t]
\centering
\begin{tabular}{ccccc}%{|p{10ex}p{8ex}p{8ex}|}
\toprule
Dataset & \# Training & \# Testing  & Architecture & Loss \\ %&sp. (\%) \\
\midrule
MNIST 	& 60,000		& 10,000 & 784-1000-500-250-30-250-500-1000-784 &Cross-entropy \\ %	& 99.84 \\
FACES	&103,500		& 62,100& 625-1000-500-250-30-250-500-1000-625&Mean squared error  \\
CURVES &20,000 & 10,000 &784-1000-500-250-30-250-500-1000-784&Cross-entropy \\
\bottomrule
\end{tabular}
 \caption{The corresponding information in autoencoders.}
\label{AE:datasets}
\end{table}
\subsection*{Deep CNNs}
In this part, we describe the implementation details for ConvNet and ResNet-18. The loss function is cross-entropy in these two problems. The hyper-parameters of each method are the same for both case unless otherwise specified.
\begin{figure}[t]
\begin{center}
   	\vskip -0.2in
	\begin{tabular}{cc}
		\subfloat[ConvNet]{
\includegraphics[width=0.2\textheight]{./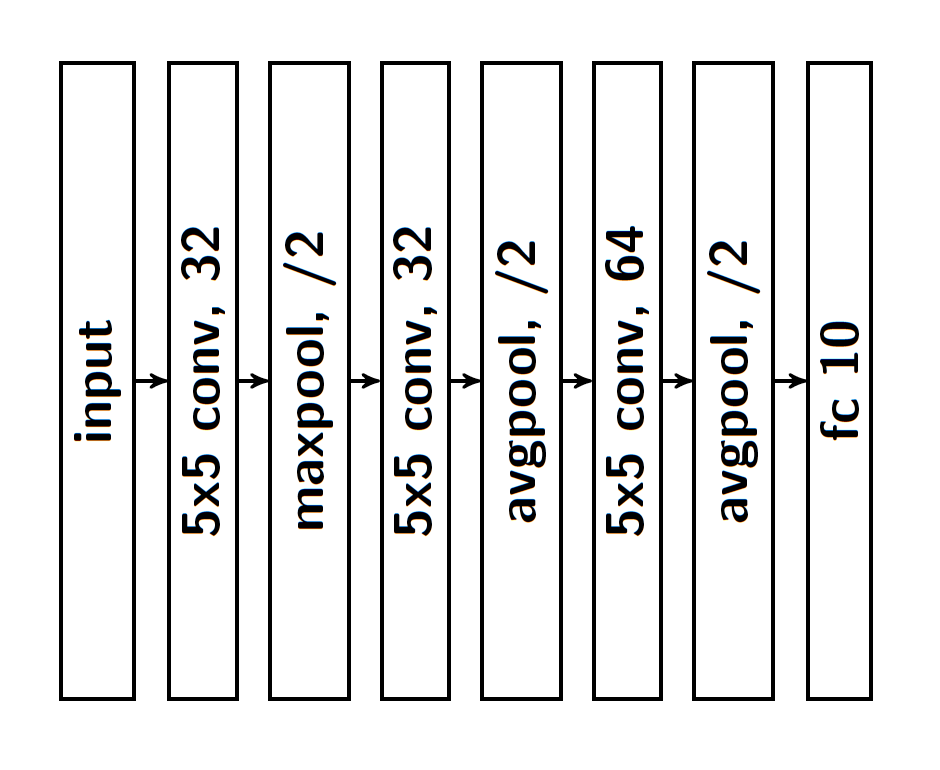}}\\
\subfloat[ResNet-18]{
\includegraphics[width=0.5\textheight]{./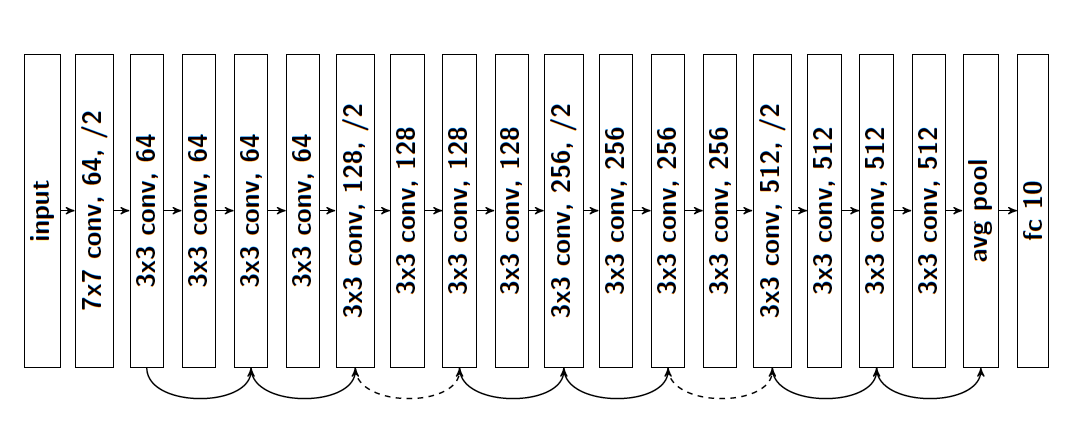}}
		\end{tabular}
	\caption{Network Architecture of ConvNet and ResNet-18.}
	\label{deepCNNs}
		\vskip -0.2in
	\end{center}
\end{figure}
\begin{itemize}
\item The network architectures used in ConvNet and ResNet-18 are presented in Figure \ref{deepCNNs}. ``conv'' in the figure means a sequence of convolutional kernel, Batch Normalization layer and Relu function. The numbers next to ``conv'' is the number of the channels of the outputs.
\item {SGD:} The momentum is set to be 0.9. The learning rate is is set to $\eta = \eta_0 (1 - \mathrm{epoch / epoch\_end})^{\widehat \beta}$. The parameters are determined by grid searching for the best result from $\alpha_0 \in \{0.01, 0.02, 0.05, 0.1, 0.2, 0.5, 1, 2, 5\}$, epoch\_end $\in \{80, 85, 90\}$ and $\widehat \beta \in \{4, 4.5, 5, 5.5, 6\}$. 
\item {KFAC:} The learning rate is $\eta = \eta_0 (1 - \mathrm{epoch / epoch\_end})^{\widehat \beta}$. The parameters are also determined from $\eta_0 \in \{0.01, 0.05, 0.1, 0.2, 0.5\}$, epoch\_end $\in \{70, 75, 80, 85\}$ and $\widehat \beta \in \{4, 5, 6\}.$ The damping parameter and the momentum parameter
are set to $0.7 \eta_0$, $0.9$, respectively. The curvature matrix is evaluated and inverted every
$50$ iterations.
\item {SKQN-L:} The memory size is $1$. The learning rate is set to be  $\eta = \eta_0 (1 - \mathrm{epoch / epoch\_end})^{\widehat \beta}$. We set $\eta_0 = 0.1$, epoch\_end = 85, $\widehat\beta = 4$ in the ConvNet and $\eta_0 = 0.15$, epoch\_end = 80, $\widehat\beta = 6$ in the ResNet-18, respectively. The damping is $0.7 \times \eta_0 (\eta / \eta_0)^{1/5}$. 
%We also add a momentum of 0.9 to the final direction calculated by our methods.
\item {SKQN-B1:} The learning rate for both cases is $\eta = 0.1\cdot (1 - \mathrm{epoch / 80})^5$. The damping is $0.8 \times 0.1\cdot (\eta / 0.1)^{1/5}$ in deep CNN problems. 
%The BFGS damping is set to be $0.5$. 

\item  {SKQN-B2:} The learning rate is set to be $\eta = \eta_0 (1 - \mathrm{epoch / epoch\_end})^{\widehat \beta}$ where we set $\eta_0 = 0.12$, epoch\_end = 85, $\widehat \beta = 5$ in the ConvNet and $\eta_0 = 0.1$, epoch\_end = 85, $\widehat \beta = 5$ in the ResNet-18. The damping  is $0.8 \times \eta_0 (\eta / \eta_0)^{1/5}$. 
%The BFGS damping is set to be $0.5$.
    % \textbf{SKQNE} is the variant with
 % the extra-gradient steps.
%The structured Hessian matrix consists of K-FAC matrix and a quasi-Newton matrix generated by stochastic L-BFGS method with the memory size 5. The set up of the K-FAC matrix is the same as the parameters used in K-FAC method. We also consider the extra-step variants \textbf{SKQNE}.
\end{itemize}

\section*{A. Proof of Theorem 1}
\begin{proof}
It follows from Assumption 2.1) that the descent property holds:
\be
\label{descent:lemma}
\Psi(y) \leq \Psi(x) + \left< \nabla \Psi(x),y-x\right> + \frac{L_{\Psi}}{2}\|y-x\|^2.
\ee
Applying (\ref{descent:lemma}) and the Young inequality,  we obtain:
\be\label{descent-eq1}\begin{aligned}
&\Psi (\theta_{k+1}) - \Psi(\theta_k) \\
\leq
& \left <\nabla \Psi(\theta_k), \theta_{k+1} - \theta_k \right > + \frac{L_{\Psi}}{2}\|\theta_{k+1} - \theta_k\|^2\\
\leq & \left < \nabla_{\mathcal{S}^k_g} \Psi(\theta_k) , - (\lambda_k I + B_k)^{-1} \nabla_{\mathcal{S}^k_g} \Psi(\theta_k) \right > + \left < \nabla \Psi(\theta_k) - \nabla_{\mathcal{S}^k_g} \Psi(\theta_k)  , - (\lambda_k I + B_k)^{-1} \nabla_{\mathcal{S}^k_g} \Psi(\theta_k) \right > \\
& + \frac{L_{\Psi}}{2} \|(\lambda_k I + B_k )^{-1}\|_2^2 \|\nabla_{\mathcal{S}^k_g} \Psi(\theta_k) \|_2^2\\
\leq &  - (h + \lambda_k)^{-1}    \|\nabla_{\mathcal{S}^k_g} \Psi(\theta_k) \|_2^2 + 
\lambda_k^{-1}\| \nabla \Psi(\theta_k) - \nabla_{\mathcal{S}^k_g} \Psi(\theta_k) \|_2^2 +\frac{ \lambda_k}{4}\|  (\lambda_k I + B_k)^{-1}\|_2^2 \| \nabla_{\mathcal{S}^k_g} \Psi(\theta_k) \|_2^2 \\
& + \frac{L_{\Psi}}{2} \|(\lambda_k I + B_k )^{-1}\|_2^2 \|\nabla_{\mathcal{S}^k_g} \Psi(\theta_k) \|_2^2\\
\leq & - \left  [  (h + \lambda_k)^{-1}  - \frac{1}{4} \lambda_k^{-1} - \frac{L_{\Psi}}{2}\lambda_k^{-2}  \right] \|\nabla_{\mathcal{S}^k_g} \Psi(\theta_k) \|_2^2 + \lambda_k^{-1} \|\nabla \Psi(\theta_k) - \nabla_{\mathcal{S}^k_g} \Psi(\theta_k) \|^2.\\ 
%& +\frac{1}{4} \lambda_k^{-1} \|\nabla_{\mathcal{S}^k_g} \Psi(\theta_k) \|_2^2 + \frac{L_{\Psi}}{2}\lambda_k^{-2}\|\nabla_{\mathcal{S}^k_g} \Psi(\theta_k) \|_2^2.
\end{aligned}
\ee
Recalling that the parameter $\lambda_k$ is adjusted by the norm of the stochastic gradient as follows for a given $r_{1} < 1< r_{2}$:
\be
\label{lambda-adjust-supp}
\lambda_{k} =
\begin{cases}
\frac{2r_1}{\|g_{k-1}\|+r_1}\alpha_k^{-1} &\|g_{k-1}\|<r_1, \\
\frac{2\|g_{k-1}\|}{\|g_{k-1}\|+r_2}\alpha_k^{-1} &\|g_{k-1}\|>r_2, \\
\alpha_k^{-1} &\text{otherwise}, \\
\end{cases}
\ee
%When $\|\nabla_{\mathcal{S}^{k-1}_g} \Psi(\theta_{k-1})\|_2 \in [r_{1}^{-1},r_{2}^{-1} ]$, we call the situation case I, otherwise we call it case II. 
we prove that $  (h + \lambda_k)^{-1}  - \frac{1}{4} \lambda_k^{-1} - \frac{L_{\Psi}}{2}\lambda_k^{-2} $ is positive and bounded in all three cases.

We first consider the case when $\|g_{k-1}\|\in [r_1, r_2]$. Since $\lambda_k^{-1} = \alpha_k < \frac{r_1}{4r_2(L_{\Psi} + h)}$, we have $\frac{L_{\Psi}}{\lambda_k} < \frac{1}{4}$, and hence
\be
\label{estimateI}
\frac{1}{h + \lambda_k } - \frac{1}{4\lambda_k} - \frac{L_\Psi}{2\lambda_k^2}> \frac{1}{h + \lambda_k } - \frac{3}{8} \frac{1}{\lambda_k} > \frac{1}{8}\alpha_k.\ee
The last inequality follows from $\lambda_k = \alpha_k^{-1} > \frac{4 r_2(L_\Psi + h)}{ r_1} > h.$ As for the case when $\|g_{k-1}\|<r_1$, we have $$\lambda_{k}^{-1} = \alpha_k \frac{\|g_{k-1}\|+r_1}{2r_1}  \in [\frac12\alpha_k, \alpha_k]. $$ Then, we can still obtain
\be
\label{estimateII}
\frac{1}{h + \lambda_k } - \frac{1}{4\lambda_k} - \frac{L_\Psi}{2\lambda_k^2}> \frac{1}{h + \lambda_k } - \frac{3}{8} \frac{1}{\lambda_k} > \frac{1}{8}\frac{1}{\lambda_k} \geq \frac{1}{16}\alpha_k.\ee
For the last case when $\|g_{k-1}\|>r_2$, it follows $$\lambda_{k}^{-1} = \alpha_k \frac{\|g_{k-1}\|+r_2}{2\|g_{k-1}\|}  \in [\frac12\alpha_k, \alpha_k], $$ which implies the desired result as in (\ref{estimateII}).

Next, by using the Young inequality and taking conditional expectation based on $\cF_{k-1}$
 together with  $\Expe[\nabla_{S_g^k} \Psi(\theta^k)|\mathcal{F}_{k-1} ]
= \nabla \Psi(\theta^k)$ yields 
\be
\label{estimate-2}
\begin{aligned}
\Expe[\| \nabla_{\mathcal{S}^k_g}\Psi(\theta^k)\|^2|\cF_{k-1} ]
=& \Expe[\|\nabla_{\mathcal{S}^k_g} \Psi(\theta^k) - \nabla \Psi(\theta^k) + \nabla \Psi(\theta^k)\|^2|\cF_{k-1} ]\\
= &  \Expe [\|\nabla_{\mathcal{S}^k_g} \Psi(\theta^k) - \nabla \Psi(\theta^k) \|^2 |\cF_{k-1} ] +  \| \nabla \Psi(\theta^k)\|^2.
\end{aligned}
\ee
%% step by step
%\be
%\begin{aligned}
%\leq & (\frac{\beta_k^2}{2\tau_k} + L_{\Psi}\beta_k^2)[2\|\nabla_{\mathcal{S}^k_g}\Psi(z_k)- \nabla \Psi(z_k)\|^2 + 4\|\nabla \Psi(\te{k})\|^2 + 4L_{\Psi}^2\alpha_k^2 \|\nabla_{\mathcal{S}^k_g} \Psi(\te{k})\|^2 ]\\
%& + \iprod{\nabla \Psi(\te{k})}{-\alpha_k B_k^{-1}\nabla_{\mathcal{S}^k_g}\Psi(\te{k}) }  + \frac{\tau_k}{2}\|\nabla \Psi(\te{k})\|^2 + L_{\Psi}\alpha_k^2h_2^2\|\nabla_{\mathcal{S}^k_g}\Psi(\te{k})\|^2 \\
%\leq &[4 (\frac{\beta_k^2}{2\tau_k} + L_{\Psi}\beta_k^2) +\frac{\tau_k}{2} + 2((\frac{\beta_k^2}{2\tau_k} + L_{\Psi}\beta_k^2)4L_{\Psi}^2\alpha_k^2 +L_{\Psi}\alpha_k^2h_2^2 )] \|\nabla \Psi(\te{k})\|^2\\ & +  (\frac{\beta_k^2}{\tau_k} +2 L_{\Psi}\beta_k^2)\|\nabla_{\mathcal{S}^k_g}\Psi(z_k)- \nabla \Psi(z_k)\|^2
% + \iprod{\nabla \Psi(\te{k})}{-\alpha_k B_k^{-1}\nabla_{\mathcal{S}^k_g}\Psi(\te{k}) } \\ &+ ((\frac{\beta_k^2}{2\tau_k} + L_{\Psi}\beta_k^2)4L_{\Psi}^2\alpha_k^2 +L_{\Psi}\alpha_k^2h_2^2 )(2\|\nabla_{\mathcal{S}^k_g} \Psi(\te{k}) -\nabla \Psi(\te{k})\|^2 ).
%\end{aligned}
%\ee
Taking the expectation related to $\samples_g^k$ of (\ref{descent-eq1}) on both
sides conditioned on $\cF_{k-1}$ and combining 
\eqref{descent-eq1}-\eqref{estimate-2}, we obtain
\be
\label{descent-eq2}
\begin{aligned}
&\Expe[\Psi(\theta_{k+1}) -   \Psi(\theta_{k})|\cF_{k-1}
]\\
\leq &
-\frac{1}{16}\alpha_k \|\nabla \Psi(\theta_{k})\|^2 + \left [\frac{1}{\lambda_k} -\frac{1}{16}\alpha_k \right ]\Expe [\|\nabla_{\mathcal{S}^k_g} \Psi(\theta^k) - \nabla \Psi(\theta^k) \|^2 |\cF_{k-1} ] \\ 
\leq &-\frac{1}{16}\alpha_k \|\nabla \Psi(\theta_{k})\|^2 + \tilde \beta_k \sigma_{k}^2,
%&\Expe[\|\nabla_{\mathcal{S}^k_g} \Psi(\theta_{k}) -\nabla \Psi(\theta_{k})\|^2|\cF_{k-1} ].
\end{aligned}
\ee
where $\tilde \beta_k  = \frac{1}{\lambda_k} -\frac{1}{16}\alpha_k \leq(1 -\frac{1}{16})\alpha_k.$
 Taking expectation, summing over the inequality and using the
assumptions that there exists $\Psi_{\text{inf}}$ such that $\Psi(\theta) \geq \Psi_{\text{inf}}, \forall \theta \in \text{dom} \Psi $ , we obtain:
\be
\begin{aligned}
& \sum_{k=1}^\infty \frac{1}{16} \alpha_k \Expe\|\nabla \Psi(\theta_{k})\|^2
 \leq  \Psi(\theta_1) - \Psi^* +    \sum_{k=1}^\infty  \tilde \beta_k\sigma_k^2 .
 \end{aligned}
\ee
Therefore, we have  $\sum_{k=1}^\infty \alpha_k \Expe\|\nabla \Psi(\theta_k)\|^2< \infty $, which implies that $\sum_{k=1}^\infty \alpha_k \|\nabla \Psi(\theta_k)\|^2< \infty $ almost surely.
%Under the addition conditions,
Consequently, we can infer \[
%    \lim_{k\rightarrow\infty} \inf \Expe\|\nabla \Psi(\theta_{k})\|^2 = 0 \text{ and }
    \lim_{k\rightarrow \infty} \inf \nabla \Psi(\theta_{k}) = 0 \text{ almost surely }.
\]
Taking expectation, multiplying $\alpha_k$ on both sides of inequality \eqref{estimate-2} and summing over all $k$ yields
\bee
\begin{aligned}
&\sum_{k=1}^\infty \alpha_k\Expe\| \nabla_{\mathcal{S}^k_g}\Psi(\theta_k)\|^2
=  \sum_{k=1}^\infty \alpha_k \sigma_k^2 + \sum_{k=1}^\infty \alpha_k \| \nabla \Psi(\theta_k)\|^2 < \infty.
\end{aligned}
\eee
By the Young inequality, it implies
\beaa
    \sum_{k=1}^\infty\alpha_k^{-1}\Expe\|\theta_{k+1}-\theta_{k}\|^2 &=& \sum_{k=1}^\infty\alpha_k^{-1}\Expe\| (B_k+\lambda_k I)^{-1}\nabla_{\mathcal{S}^k_g}\Psi(\theta_{k})\|^2 \\
&\leq & \sum_{k=1}^\infty 2\frac{1}{\alpha_k(\lambda_k )^2 }\Expe\| \nabla_{\mathcal{S}^k_g}\Psi(\theta_{k})\|^2\\ &<& \infty.
\eeaa
It follows that
\[
\sum_{k=1}^\infty \alpha_k^{-1} \Expe\|\theta_{k+1} - \theta_k\|^2< \infty
\text{ and }
\sum_{k=1}^\infty \alpha_k^{-1} \|\theta_{k+1} - \theta_k\|^2< \infty \text{ almost surely}.
\]
On the  events $\mathcal{E} = \{\|\nabla \Psi(\theta_k)\|  \text{ does not converge}\}$, there exists $\epsilon>0$ and two increasing sequences $\{p_i\}_i $, $\{q_i\}_i$ such that $p_i < q_i$ and
\[
    \|\nabla \Psi(\theta_{p_i})\| \geq 2\epsilon, \quad \|\nabla \Psi(\theta_{q_i})\| < \epsilon, \quad \|\nabla \Psi(\theta_{k})\| \geq \epsilon,
\]
for $ k = p_i+1,\dots,q_i-1.$ Thus, it follows that
\be
\begin{aligned}
\epsilon^2 \sum_{i=0}^{\infty} \sum_{k=p_i}^{q_i-1} \alpha_k
& \leq \sum_{i=0}^{\infty} \sum_{k=p_i}^{q_i-1} \alpha_k \|\nabla \Psi(\theta_k)\|^2
\leq \sum_{k=0}^{\infty}\alpha_k \|\nabla \Psi(\theta_{k})\|^2< \infty .
\end{aligned}
\ee
Setting $\zeta_i = \sum_{k=p_i}^{q_i-1} \alpha_k$, it follows $\zeta_i\rightarrow 0.$ Then by the H\"older's inequality, we obtain
\[
    \|\theta_{p_i} -\theta_{q_i} \| \leq \sqrt{\zeta_i} [\sum_{k=p_i}^{q_i-1} \alpha_k^{-1}\|\theta_{k+1} - \theta_k\|^2]^{1/2} \rightarrow 0.
\]
Due to the Lipschitz property of $\nabla \Psi$, we have $\|\nabla \Psi(\theta_{p_i}) -\nabla \Psi(\theta_{q_i}) \|\rightarrow 0$, which is a contradiction. This implies $\mathbb{P}(\mathcal{E})= 0$. Hence, $\nabla \Psi(\theta_k) $ converges to zero almost surely.

%In a similar fashion, we can also prove the convergence in expectation. Suppose that $\Expe\|\nabla \Psi(\theta_{k})\|$ does not converge. There exists $\epsilon>0$ and two increasing sequences $(p_i)_i $, $(q_i)_i$ such that $p_i < q_i$ and
%\[
%    \Expe\|\nabla \Psi(\theta_{p_i})\| \geq 2\epsilon, \quad \Expe\|\nabla \Psi(\theta_{q_i})\| < \epsilon, \quad \Expe\|\nabla \Psi(\theta_{k})\| \geq \epsilon,
%\]
%for $ k = p_i+1,\dots,q_i-1.$ Thus, it follows that
%\be
%\begin{aligned}
%\epsilon^2 \sum_{i=0}^{\infty} \sum_{k=p_i}^{q_i-1} \alpha_k
%& \leq \sum_{i=0}^{\infty} \sum_{k=p_i}^{q_i-1} \alpha_k \Expe \|\nabla \Psi(\theta_k)\|^2
%\leq \sum_{k=0}^{\infty}\alpha_k \Expe \|\nabla \Psi(\theta_{k})\|^2< \infty .
%\end{aligned}
%\ee
%Setting $\zeta_i = \sum_{k=p_i}^{q_i-1} \alpha_k$, it follows $\zeta_i\rightarrow 0.$ Then by the H\"older's inequality, we have
%\[
%    \|\theta_{p_i} -\theta_{q_i} \| \leq \sqrt{\zeta_i} [\sum_{k=p_i}^{q_i-1} \alpha_k^{-1}\|\theta_{k+1} - \theta_k\|^2]^{1/2} \rightarrow 0 \text{ almost surely }.
%\]
%Due to the Lipschitz property of $\nabla \Psi$, we have $\Expe \|\nabla \Psi(\theta_{p_i}) -\nabla \Psi(\theta_{q_i}) \|\rightarrow 0$, which is a contradiction. It implies that $\Expe \| \nabla \Psi(\theta_k) \| $ converges to zero.
\end{proof}

\section*{B. Proof of Theorem 2}
\begin{proof}
From the inequality \eqref{descent-eq2} in the proof of Theorem 1, we have 

%\label{key-estimate}
\be
%\label{descent-eq2}
\begin{aligned}
&\Expe[\Psi(\theta_{k+1}) -   \Psi(\theta_{k})|\cF_{k-1}]
\leq &-\frac{1}{16}\alpha_k \|\nabla \Psi(\theta_{k})\|^2 +\frac{15}{16}\alpha_k \sigma_{k}^2.
%&\Expe[\|\nabla_{\mathcal{S}^k_g} \Psi(\theta_{k}) -\nabla \Psi(\theta_{k})\|^2|\cF_{k-1} ].
\end{aligned}
\ee
Combining the Assumption 3.1) and $\sigma_k^2 \leq M_{\sigma}\zeta^{k-1}$, we have:
\[
\Expe[\Psi(\theta_{k+1}) -   \Psi_{\text{inf} }|\cF_{k-1}] \leq  (1-\frac{1}{8}c\alpha_k)\Expe[\Psi(\theta_{k}) -   \Psi_{\text{inf} }|\cF_{k-1}] + \frac{15}{16}\alpha_k M_{\sigma}\zeta^{k-1}.
\]
We prove Theorem 2 by induction. For $k=1$, the inequality holds by the definition $\mu = \max\{\Psi (\theta_1) - \Psi_{\text{inf}}, \frac{15 M_{\sigma}}{c}\}$. Then, we assume the inequality holds for $k\in \mathbb{N}$. Combining $\alpha_k \equiv \alpha < \min \left \{ \frac{r_1}{4r_2(L_\Psi+ h)}, \frac{8}{c}  \right\}$, $\mu = \max\{\Psi (\theta_1) - \Psi_{\text{inf}}, \frac{15 M_{\sigma}}{c}\}$ and $\nu = \max \{\zeta, 1-\frac{1}{16} c\alpha\}$, we have  
\be
\begin{aligned}
\Expe[\Psi(\theta_{k+1}) -   \Psi_{\text{inf} }|\cF_{k-1}] &\leq  (1-\frac{1}{8}c\alpha_k)\mu \nu^{k-1} +  \frac{15}{16} \alpha_k M_{\sigma}\zeta^{k-1} \\
&\leq \mu \nu^{k-1} \left(1-\frac{1}{8}c\alpha_k +  \frac{15}{16}M_{\sigma}\frac{\alpha_k}{\mu}       \right)\\
& \leq \mu \nu^{k-1}  \left(1-\frac{1}{16}c\alpha_k\right)\\
& \leq \mu \nu^{k},
\end{aligned}
\ee
which proves the inequality for $k+1$. This completes the proof of Theorem 2.

\end{proof}

\section*{C. Proof of Theorem 3}
We first state the settings of Theorem 3.
Consider the case when $\psi_{i}(\theta)=\ell_i(\theta) = \ell(f(x_i,\theta),y_i)$, where $f(\cdot,x): \Rn^{n } \rightarrow \Rn^{m} $. The Hessian matrix is $\nabla^2 \Psi(\theta) : = H(\theta) + \Pi(\theta)$, where
\begin{eqnarray}
    H(\theta)&=&\frac{1}{N} \sum_{i=1}^N H_i(\theta) = \frac{1}{N} \sum_{i=1}^N J_f^i(\theta)  \nabla_f^2 \ell_i(\theta)
\left(J_f^i(\theta)\right)^\top, \label{GGN-mtx-supp}\\
\Pi(\theta)&=&\frac{1}{N} \sum_{i=1}^N \Pi_i(\theta) =\frac{1}{N} \sum_{i=1}^N\sum_{j=1}^m \nabla_{f_j}
\ell_i(\theta)\nabla_\theta^2f_j^i(\theta), \label{GGN2-mtx-supp}
\end{eqnarray}
where $J_f^i(\theta)=\nabla_\theta f(x_i,\theta)\in \mbR^{n\times m}$ and
$f_j^i(\theta)$ is the $j$-th component of $f_i(\theta)$.

Consider the iteration in the neighborhood of $\theta^*$ : 
\be
\label{iteration-local-supp}
 \theta_{k+1} = \theta_k - B_k^{-1} \nabla \Psi(\theta_k).\ee

 Here we consider the true gradient for simplicity and the conclusion also holds for stochastic gradient by adjusting relative assumptions.

The curvature matrix is
\[
B_k =  H_{\mathcal{S}_H^k}(\theta) + \Lambda_k,
\]
where $ H_{\mathcal{S}_H^k}(\theta)$ is the base matrix and $\Lambda_k$ is the refinement matrix. We formulate $H_{\mathcal{S}_H^k}(\theta)$ as
\begin{eqnarray}
    H_{\mathcal{S}_H^k}(\theta)&=&\frac{1}{|\mathcal{S}_H^{k}|} \sum_{i\in \mathcal{S}_H^k}H_i(\theta), \label{GGN-mtx-supp}
\end{eqnarray}
and $\Lambda_k$ is generated by the BFGS method. Suppose that $\Lambda_k$ satisfies the following secant condition:
\be
\label{local-secant-supp}
\Lambda_k u_{k-1} = v_{k-1}, 
\ee

where $u_{k-1} = \theta_k - \theta_{k-1}$ and 
\be 
\begin{aligned} v_{k-1} =& \frac{1}{|\mathcal{S}_H^{k-1}|} \sum_{i\in \mathcal{S}_H^{k-1}}\left ( J_f^i(\theta_{k}) - J_f^{i}(\theta_{k-1})\right)\nabla_f\ell_i(\theta_{k})\\
 =& \frac{1}{|\mathcal{S}_H^{k-1}|} \sum_{i\in \mathcal{S}_H^{k-1}} \sum_{j=1}^m \left ( \nabla_{\theta} f_j^i(\theta_{k}) -\nabla_{\theta} f_j^i(\theta_{k-1} )\right) \nabla_{f_j}\ell_i(\theta_k).
\end{aligned}
\ee

We want to prove that if the sample size is sufficiently large, then the stochastic Dennis-More condition holds. Hence, the local superlinear convergence speed can be guaranteed.
A few assumptions are listed below.
%\begin{assumption}
%\label{bounded-GGN}
%\begin{enumerate}
%\item [1.1)] The sequence $\{ \theta_k \}$ satisfies $\sum_k \|\theta_k - \theta^*\| < \infty$ a.s. for an optimal point $\theta^*$ where $\nabla^2 \Psi(\theta^*)$ are positive definite and there exists $\widetilde \lambda > 0$ such that for $i=1,\dots, n, $
%	\[
%	\Pi_i(\theta^*) \succeq \widetilde \lambda I.
%	\] 
%
%	\item [1.2)] The GGN matrix $H_i(\theta)$ and $\Pi_i(\theta)$ are bounded for $i=1,\dots,n,$
%	\[ \|H_i(\theta)\|\leq \kappa_1 \quad  \text{and}  \quad \|\Pi_i(\theta)\| \leq \kappa_2 . \]
%	\item [1.3)] $H(\theta)$ is Lipschitz continuous near $\theta^*$ with Lipschitz constant $L_H$ $\forall i=1,\dots, N$,
%	\[\|H(\theta_1) - H(\theta_2)\|\leq L_H \|\theta_1-\theta_2\|.\]
%	\end{enumerate}
%\end{assumption}
%
%\begin{assumption}
%\label{GN-Lip}
%\begin{itemize}
%\item  [2.1)] $J_f^i(\theta)$ is s Lipschitz continuous near $\theta^*$ with Lipschitz constant $L_J$  $\forall i=1,\dots, N$,
%\[
%\|J_f^i(\theta_1) - J_f^i(\theta_2) \| \leq L_J \|\theta_1-\theta_2\|.
%\]
%\item [2.2)] 
%$\nabla_f \ell(\theta)$ is bounded and Lipschitz continuous near $\theta^*$ with Lipschitz constant $L_\ell$  $\forall i=1,\dots, N$,
%\[
%\|\nabla_f \ell_i(\theta) \| \leq \kappa_\ell  \quad \text{and} \quad \|\nabla_f \ell_i(\theta_1) - \nabla_f \ell_i (\theta_1) \| \leq L_\ell \|\theta_1-\theta_2\|.
%\]
%\item [2.3)] $f_j^i(\theta)$ is twice continuously differentiable for $i=1,\dots,N$ and $j = 1,\dots,m$
%\end{itemize}
%\end{assumption}
%
\begin{assumption}\label{local-assump-supp}
\begin{enumerate}
\item [1.1)] The sequence $\{ \theta_k \}$ satisfies $\sum_k \|\theta_k - \theta^*\| < \infty$ a.s. for an optimal point $\theta^*$ where $\nabla^2 \Psi(\theta^*)$ are positive definite and there exists $\widetilde \lambda > 0$ such that for $i=1,\dots, n, $
	$
	\Pi_i(\theta^*) \succeq \widetilde \lambda I.
	$
\item [1.2)] 
The gradient $\nabla_f \ell_i(\theta)$ is bounded, the Hessian $\nabla^2_f \ell_i(\theta) $ is bounded and
 Lipschitz continuous near $\theta^*$ with Lipschitz constant $L_\ell$, $\forall i=1,\dots, N$, i.e., $
\|\nabla_f \ell_i(\theta) \|_2 \leq \kappa_\ell,$
$\|\nabla^2_f \ell_i(\theta) \|_2 \leq \widetilde \kappa_\ell$
 and $\|\nabla^2_f \ell_i(\theta_1) - \nabla^2_f \ell_i (\theta_1) \|_2 \leq L_\ell \|\theta_1-\theta_2\|,$ for any $\theta_1, \theta_2$ near $\theta^*.$
\item [1.3)] The gradient $\nabla f_j^i(\theta)$ is bounded, the Hessian $\nabla^2 f_j^i(\theta)$ is bounded and Lipschitz continuous near $\theta^*$ with Lipschitz constant $L_f,$ $\forall i=1,\dots,N$ and $\forall j = 1,\dots,m$, i.e., $\|\nabla f_j^i(\theta)\|_2 \leq \kappa_f$, $\|\nabla^2 f_j^i(\theta)\|_2\leq \tilde \kappa_f$ and $\|\nabla^2 f_j^i(\theta_1) - \nabla^2 f_j^i(\theta_2) \|_2 \leq L_f \|\theta_1-\theta_2\|_2
$ for any $\theta_1, \theta_2$ near $\theta^*.$
\end{enumerate}
\end{assumption}

\begin{lemma}\label{local-lemma}
Under Assumption \ref{local-assump-supp}, the following conclusions hold:
\begin{itemize}
\item The GGN matrix $H_i(\theta)$ and $\Pi_i(\theta)$ are bounded for $i=1,\dots,n,$ i.e., there exists constants $\kappa_H$ and $\kappa_\Pi$ such that for all $\theta$ near $\theta^*$,
	\[ \|H_i(\theta)\|\leq \kappa_H \quad  \text{and}  \quad \|\Pi_i(\theta)\| \leq \kappa_\Pi . \]
\item  $J_f^i(\theta)$ is Lipschitz continuous near $\theta^*$ with Lipschitz constant $L_J$, $\forall i=1,\dots, N$, i.e.,
\[
\|J_f^i(\theta_1) - J_f^i(\theta_2) \| \leq L_J \|\theta_1-\theta_2\|,
\]
for any $\theta_1, \theta_2$ near $\theta^*$.

\item$H(\theta)$ is Lipschitz continuous near $\theta^*$ with Lipschitz constant $L_H$, $\forall i=1,\dots, N$, i.e., 
	\[\|H(\theta_1) - H(\theta_2)\|\leq L_H \|\theta_1-\theta_2\|,\]
	for any $\theta_1, \theta_2$ near $\theta^*$.

\end{itemize}
\end{lemma}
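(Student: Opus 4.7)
Each of the three claims reduces to routine estimates combining the uniform bounds and local Lipschitz hypotheses in Assumption~\ref{local-assump-supp}. The only preliminary is to fix a single closed ball $\mathcal{N}\subset\Rn^n$ around $\theta^*$ on which every ``near $\theta^*$'' hypothesis holds simultaneously for all finitely many $i,j$; this is obtained by intersecting the finitely many neighborhoods that Assumption~\ref{local-assump-supp} provides. All of the constants $\kappa_H,\kappa_\Pi,L_J,L_H$ produced below will depend only on $m,\kappa_f,\kappa_\ell,\tilde\kappa_f,\tilde\kappa_\ell,L_f,L_\ell$, and in particular are uniform in $i=1,\dots,N$.

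For the boundedness assertions, stacking the columns of $J_f^i$ gives $\|J_f^i(\theta)\|\le\sqrt{m}\,\kappa_f$, and submultiplicativity of the operator norm then yields $\|H_i(\theta)\|\le\|J_f^i\|^2\,\|\nabla_f^2\ell_i\|\le m\kappa_f^2\tilde\kappa_\ell=:\kappa_H$. A term-wise estimate on the sum defining $\Pi_i$ gives $\|\Pi_i(\theta)\|\le\sum_{j=1}^m\|\nabla_{f_j}\ell_i\|\,\|\nabla_\theta^2 f_j^i\|\le m\kappa_\ell\tilde\kappa_f=:\kappa_\Pi$.

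For the Lipschitz continuity of $J_f^i$, each column $\nabla f_j^i$ is $\tilde\kappa_f$-Lipschitz on $\mathcal{N}$ by the mean value inequality applied to its derivative $\nabla^2 f_j^i$, which is bounded by $\tilde\kappa_f$. Collecting the $m$ columns under the Frobenius-dominates-operator bound gives $L_J=\sqrt{m}\,\tilde\kappa_f$.

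The main calculation is Part~3. A key preliminary is that $\theta\mapsto\nabla_f^2\ell_i(\theta)=\nabla_f^2\ell(f(x_i,\theta),y_i)$ is Lipschitz on $\mathcal{N}$: the map $f(x_i,\cdot)$ is $(\sqrt{m}\,\kappa_f)$-Lipschitz since its Jacobian $J_f^i$ is bounded, and composing with the $L_\ell$-Lipschitz map $u\mapsto\nabla_f^2\ell(u,y_i)$ yields a Lipschitz constant $L_\ell\sqrt{m}\,\kappa_f$. I would then use the three-term telescope
\begin{align*}
H_i(\theta_1)-H_i(\theta_2) &= \bigl(J_f^i(\theta_1)-J_f^i(\theta_2)\bigr)\nabla_f^2\ell_i(\theta_1)J_f^i(\theta_1)^\top\\
&\quad + J_f^i(\theta_2)\bigl(\nabla_f^2\ell_i(\theta_1)-\nabla_f^2\ell_i(\theta_2)\bigr)J_f^i(\theta_1)^\top\\
&\quad + J_f^i(\theta_2)\nabla_f^2\ell_i(\theta_2)\bigl(J_f^i(\theta_1)-J_f^i(\theta_2)\bigr)^\top,
\end{align*}
bound each term by the product of the uniform bounds of its fixed factors times the Lipschitz constant of its varying factor, and collect constants to get a Lipschitz constant $L_H^{(i)}$ for $H_i$. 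Averaging over $i=1,\dots,N$ gives the Lipschitz constant $L_H$ for $H=\tfrac1N\sum_i H_i$. No step requires more than bookkeeping; the only ``hard'' part is being systematic about tracking the constants, which is why I set up the common neighborhood $\mathcal{N}$ at the outset.
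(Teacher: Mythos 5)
Your proof is correct and follows essentially the same route as the paper's: bound $\|J_f^i\|$ by $\sqrt{m}\,\kappa_f$ via the column/Frobenius estimate to get $\kappa_H$ and $\kappa_\Pi$, derive the Lipschitz continuity of $J_f^i$ from the boundedness of $\nabla^2 f_j^i$, and telescope the product $J_f^i\,\nabla_f^2\ell_i\,(J_f^i)^\top$ to get $L_H$. The only cosmetic differences are your three-term telescope versus the paper's two-step split, and your re-derivation of the $\theta$-Lipschitz continuity of $\nabla_f^2\ell_i$ by composition, which Assumption 1.2 already states directly.
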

\begin{proof}
We first prove that $H_i(\theta)$ and $\Pi_i(\theta)$ are bounded. 
Recalling $H_i(\theta)= J_f^i(\theta) \nabla_f^2 \ell_i(\theta)\left(J_f^i(\theta)\right)^\top$, we have
\be\begin{aligned} \|J_f^i(\theta) \nabla_f^2 \ell_i(\theta)(J_f^i(\theta))^\top \|_2 &\leq \|J_f^i(\theta) \|_2 \|\nabla_f^2 \ell_i(\theta)\|_2\|\left(J_f^i(\theta)\right)^\top \|_2   \\
 &\leq \|J_f^i(\theta) \|_F\| \nabla_f^2 \ell_i(\theta)\|_2\|\left(J_f^i(\theta)\right)^\top \|_F. \end{aligned}\ee
Since $J_f^i(\theta) = [\nabla f_1^i(\theta), \dots, \nabla f_m^i(\theta) ]$ and $\| \nabla f_j^i(\theta) \|_2\leq \kappa_f$, we have $\|J_f^i(\theta) \|_F \leq \sqrt{m} \kappa_f$ which implies $\|H_i(\theta) \|_2 \leq m \kappa_f^2  \tilde \kappa_\ell := \kappa_H.$ Similarly, we have
\be\begin{aligned} 
\|\Pi_i(\theta)\|_2 &= \|\sum_{j=1}^m \nabla_{f_j} \ell_i(\theta)\nabla_\theta^2f_j^i(\theta)\|_2\leq \sum_{j=1}^m \| \nabla_{f_j} \ell_i(\theta)\nabla_\theta^2f_j^i(\theta)\|_2\\
&\leq  \sum_{j=1}^m |\nabla_{f_j} \ell_i(\theta)| \| \nabla_\theta^2f_j^i(\theta)\|_2 \leq m \kappa_\ell \tilde \kappa_f := \kappa_\Pi.
\end{aligned}\ee

We next show that $J_f^i(\theta)$ is Lipschitz continuous. For each row of $J_f^i(\theta)$, we get:
 \[\nabla f_j^i(\theta_2) - \nabla f_j^i(\theta_1)  = \int_{0}^1 \nabla^2  f_j^i \left ((1-t)\theta_1+t(\theta_2)\right) \left(\theta_2 - \theta_1 \right ) dt.\]
 This implies $ \|\nabla f_j^i(\theta_2) - \nabla f_j^i(\theta_1) \|_2 \leq \frac{1}{2} \tilde \kappa_f \|\theta_2-\theta_1\|_2$ and 
\be\begin{aligned}
\|J_f^i(\theta_1) - J_f^i(\theta_2) \|_2 &\leq \|J_f^i(\theta_1) - J_f^i(\theta_2) \|_F \\
& \leq \frac{\sqrt{m }}{2}\tilde \kappa_f \|\theta_1-\theta_2\|_2\\
&:= L_J \|\theta_1-\theta_2\|_2 .\\
\end{aligned}\ee
Finally, we show that
$H_i(\theta)$ is Lipschitz continuous near $\theta^*$ with Lipschitz constant $L_H$:
	\be
	\begin{aligned}
	&\|H_i(\theta_1) - H_i(\theta_2)\|_2\\  =&   \| J_f^i(\theta_2)  \nabla_f^2 \ell_i(\theta_2)(J_f^i(\theta_2))^\top - J_f^i(\theta_1)  \nabla_f^2 \ell_i(\theta_2)(J_f^i(\theta_2))^\top \\ &+ J_f^i(\theta_1)  \nabla_f^2 \ell_i(\theta_2)(J_f^i(\theta_2))^\top-  J_f^i(\theta_1)  \nabla_f^2 \ell_i(\theta_1)(J_f^i(\theta_1))^\top\|_2 \\
	\leq & \|J_f^i(\theta_2)   - J_f^i(\theta_1)\|_2 \|  \nabla_f^2 \ell_i(\theta_2)\|_2\|(J_f^i(\theta_2))^\top \|_2 \\ &+  \|J_f^i(\theta_1)  \nabla_f^2 \ell_i(\theta_2)(J_f^i(\theta_2))^\top-  J_f^i(\theta_1)  \nabla_f^2 \ell_i(\theta_1)(J_f^i(\theta_1))^\top\|_2   \\
	\leq & L_f\|\theta_2 -\theta_1\|_2 \tilde \kappa_\ell \sqrt{m}\kappa_f +  \sqrt{m}\kappa_f \left( \| \nabla_f^2 \ell_i(\theta_2)(J_f^i(\theta_2))^\top -   \nabla_f^2 \ell_i(\theta_1)(J_f^i(\theta_1))^\top \|_2\right)\\
	\leq & L_f\|\theta_2 -\theta_1\|_2 \tilde \kappa_\ell \sqrt{m}\kappa_f +  \sqrt{m}\kappa_f \left( \tilde \kappa_\ell \| (J_f^i(\theta_2))^\top - (J_f^i(\theta_1))^\top\|_2 + \sqrt{m}\kappa_f\|\nabla_f^2 \ell_i(\theta_2) -   \nabla_f^2 \ell_i(\theta_1)\|_2\right)\\
	\leq & L_f\|\theta_2 -\theta_1\|_2 \tilde \kappa_\ell \sqrt{m}\kappa_f +  \sqrt{m}\kappa_f \left( \tilde \kappa_J L_\ell\| \theta_1-\theta_2\|_2 + \sqrt{m}\kappa_fL_\ell\|\theta_1-\theta_2\|_2\right)\\
	 :=& L_H \|\theta_1-\theta_2\|_2.
	\end{aligned}
	\ee
	\end{proof}
Our local results are summarized in the following theorem.
\begin{theorem2}
Suppose that Assumption \ref{local-assump-supp} is satisfied. If the sample size $|S_H^k|$ increases superlinearly,
then the sequence $\{\theta_k\}$ generated by \eqref{iteration-local-supp} converges to $\theta^*$ superlinearly almost surely.
\end{theorem2}
\begin{proof}
The proof is divided into two parts. The first part is to show that the stochastic Dennis-Mor\"e condition holds almost surely, i.e., 
\be \label{local-dennis-more} \lim_{k\rightarrow \infty} \frac{\|(B_k - \nabla^2 \Psi(\theta^*)s_k)\|_2}{\|s_k\|_2} = 0\  a.s..\ee
The second part is to show that we can obtain the superlinear convergence rate from \eqref{local-dennis-more}.

\textbf{(1.)} By Lemma \ref{local-lemma}, we have $\|H_{S_H^k}(\theta) - H(\theta)\|\leq 2\kappa_H$, $\|\Pi_{S_H^k}(\theta) - \Pi(\theta)\|\leq 2\kappa_\Pi$. The matrix Bernstein's inequality yields 
\[
\Prob(\| H_{S_H^k}(\theta) - H(\theta)\|_2\geq \epsilon_k ) \leq 2 n \exp\{-\frac{\epsilon_k^2 |S_H^k|}{16\kappa_1^2	}\} \ \text{and} \
\Prob(\| \Pi_{S_H^k}(\theta) - \Pi(\theta)\|_2\geq \epsilon_k ) \leq 2 n \exp\{-\frac{\epsilon_k^2 |S_H^k|}{16\kappa_2^2	}\} .
\]
By construction, let $\sum_{k=1}^\infty \epsilon_k < \infty$ and the sample size grow so that $\sum_{k=1}^\infty  2 n \exp\{-\frac{\epsilon_k^2 |S_H^k|}{16\kappa^2	}\} < \infty $. This can be guaranteed, for example, if we choose $\epsilon_k = O(\frac{1}{k^{1+\delta_1}})$ and $|S_H^k| = O(k^{3+3\delta_1})$.

By Borel-Cantelli Lemma, there exists $k_0$ such that $\forall k>k_0$, $\| H_{S_H^k}(\theta) - H(\theta)\|_2 \leq \epsilon_k$ a.s. and $\| \Pi_{S_H^k}(\theta) - \Pi(\theta)\|_2 \leq \epsilon_k$ a.s..
Define the space where $\sum_k \|\theta_k - \theta^*\| < \infty$, $\| H_{S_H^k}(\theta) - H(\theta)\|_2 \leq \epsilon_k$ and $\| \Pi_{S_H^k}(\theta) - \Pi(\theta)\|_2 \leq \epsilon_k$ by $\Xi$. It is easy to know that $\mathbb{P}(\Xi) = 1$. Denote $e_k = \max \{\|\theta_{k+1} - \theta^*\|, \|\theta_k - \theta^*\|\},$ and $\sum_{i=1}^\infty e_k < \infty$ in space $\Xi$.  

Define two hypothetical sequences:
\[
\begin{aligned}
\widehat{\Lambda}_{k+1} &= {\Lambda}_k  - \frac{\Lambda_ku_ku_k^\top\Lambda_k}{u_k^\top \Lambda_ku_k} + \frac{\Pi_{S_{H}^k}(\theta^*)u_ku_k^\top \Pi_{S_{H}^k}(\theta^*)}{u_k^\top \Pi_{S_{H}^k}(\theta^*)u_k},\\
\widetilde{\Lambda}_{k+1} &= {\Lambda}_k  - \frac{\Lambda_ku_ku_k^\top\Lambda_k}{u_k^\top \Lambda_ku_k} + \frac{\Pi(\theta^*)u_ku_k^\top \Pi(\theta^*)}{u_k^\top \Pi(\theta^*)u_k}.\\
\end{aligned}
\]
From Lemma C.14 \cite{zhou2017stochastic}, we have:
\[
\|\widetilde{\Lambda}_{k+1} - I \|_F^2 - \|\Lambda_{k} - I\|_F^2 = - \left [ \left(1-\frac{u_k^\top \Lambda_k\Lambda_k u_k}{u_k^\top \Lambda_ku_k}\right)^2 + 2 \left( \frac{u_k^\top \Lambda_k\Lambda_k\Lambda_ku_k}{u_k^\top \Lambda_ku_k} - \left(\frac{u_k^\top \Lambda_k\Lambda_ku_k}{u_k^\top \Lambda_ku_k}\right)^2  \right)\right].
\]
Without loss of generality, we assume that $\Pi(\theta^*) = I$, otherwise do linear transformation for variables by $\widetilde \theta = \Pi(\theta^*)^{1/2}\theta$.
We next need to show that $ \| \Lambda_{k} - I \| - \| \widetilde \Lambda_{k+1} - I \| \rightarrow 0.$

From section 4 in \cite{griewank1982local}, this is required to prove that $$\|\Lambda_{k+1} - \widetilde \Lambda_{k+1}\| \leq O(\epsilon_k + e_k).$$
%\[
%\|\widetilde{\Lambda}_k - I \|_F^2 \leq \|{\Lambda}_k - I \|_F^2
%\]
From Lemma C.15 in \cite{zhou2017stochastic}, this is required to prove that there exists constants $c_1, c_2, c_3, c_4$ such that:
\begin{itemize}
\item[a.1.1)] $c_1 u_k^\top u_k \leq v_k^\top u_k \leq c_2 u_k^\top u_k$,
\item[a.1.2)] $\| \delta_k \| \leq  c_3 \|u_k\|e_k$,
\item[a.1.3)] $ \frac{v_k^\top \delta_k}{u_k^\top v_k} \leq c_4 e_k $,
\end{itemize}
where $\delta_k = \Pi_{S_H^k}(\theta^*) u_k - v_k$.

%{\color{red} need to prove  a.1.1), a.1.2) and a.1.3 hold.}

From Assumption \ref{local-assump-supp}, we can obtain that when $\theta_{k}$ nears $\theta^*$, there exists $c_1 < \frac{1}{2}\widetilde \lambda$ such that $v_k^\top u_k \geq c_1 u_k^\top u_k$. By Lemma \ref{local-lemma}, it is easy to know that 
$v_k^\top u_k \leq \|u_k\| \|v_k\| \leq  L_{J} \kappa_\ell \|u_k\|^2. $ Let $c_2 =  L_{J} \kappa_\ell$ and we prove a.1.1). Note that each $f_j^i$ is twice continuously differentiable, we have \be
\begin{aligned}
\delta_k &= \Pi_{S_H^k}(\theta^*) u_k - v_k\\
& = \frac{1}{|\mathcal{S}_H^{k}|} \sum_{i\in \mathcal{S}_H^{k}} \sum_{j=1}^m \int_{0}^1 \left(  \nabla_{f_j}
\ell_i(\theta^*)\nabla_\theta^2f_j^i(\theta^*) - \nabla_{f_j}\ell_i(\theta_{k+1}) \nabla^2  f_j^i \left ((1-t)\theta_k+t(\theta_{k+1})\right)\right) u_k dt.
\end{aligned}
\ee 
Since $\| \nabla_{f_j} \ell_i(\theta)\|_2 \leq  \kappa_{\ell}$, $\| \nabla_{f_j}^2 \ell_i(\theta)\|_2 \leq \tilde \kappa_{\ell}$ and $\| \nabla^2 f_j^i(\theta_1) - \nabla^2 f_j^i(\theta_2)) \|_2 \leq L_f \|\theta_1-\theta_2\|_2, \forall i,j$, we conclude that there exists constant $c_3$, such that a.1.2 holds.
a.1.3 follows from a.1.1 and a.1.2 immediately by Cauchy–Schwarz inequality.

By a.1.1), a.1.2) and a.1.3), following from Lemma C.15 in \cite{zhou2017stochastic}, we can prove that 
\be
\begin{aligned}
\|\Lambda_{k+1} - \widehat \Lambda_{k+1} \| & = \left \|-\frac{v_k\delta^\top +\delta v_k^\top +\delta\delta^\top}{u_k^\top v_k}  + \frac{v_k^\top\delta(v_kv_k^\top +v_k\delta^\top +\delta v_k^\top +\delta\delta^\top)}{u_k^\top v_k + v_k^\top \delta}\right \| \leq O(e_k),\\
\|\widehat \Lambda_{k+1} - \widetilde \Lambda_{k+1}\|& = \left \|-\frac{\widetilde v_k \hat \delta^\top + \hat \delta \widetilde v_k^\top + \hat \delta \hat\delta^\top}{u_k^\top \widetilde v_k}  + \frac{ \widetilde v_k^\top \hat \delta( \widetilde v_k \widetilde v_k^\top +\widetilde v_k\hat \delta^\top +\hat \delta \widetilde v_k^\top +\hat \delta \hat \delta^\top)}{u_k^\top {\widetilde v}_k + v_k^\top \hat \delta} \right \| \leq O(\epsilon_k),
\end{aligned}
\ee
where $\widehat v_k =  \Pi_{S_H^k}(\theta^*) u_k$, $\widetilde  v_k = \Pi(\theta^*) u_k$ and $\hat{\delta} = \widehat v_k -\widetilde v_k.$
This shows that
\[
\| \Lambda_{k+1} - \widetilde \Lambda_k  \|  \leq O(e_k + \epsilon_k).
\]
Following the same idea of section 4 in \cite{griewank1982local}, we have
\[
\lim_{k \rightarrow \infty} \frac{\|(\Lambda_k -I) u_k \|}{\|u_k\|} = 0\quad a.s. .
\]
Our previous results yield that: 
\be
\label{stochatic-dennis-more}
\begin{aligned}
&\lim_{k\rightarrow \infty} \frac{\|(B_k - \nabla^2 \Psi(\theta^*)s_k)\|}{\|s_k\|}\\
=&\lim_{k\rightarrow \infty}\frac{\|(H_{S_H^k} + \Lambda_k - \nabla \Psi(\theta^*))u_k\|}{\|u_k\|}\\ 
=&\lim_{k\rightarrow \infty} \frac{\|(H_{S_H^k}(\theta_k) - H(\theta_k) + H(\theta_k) - H(\theta^*)+ \Lambda_k - \Pi(\theta^*))u_k\|}{\|u_k\|} \\
\leq & \lim_{k\rightarrow \infty}  \frac{\|(H_{S_H^k}(\theta_k) - H(\theta_k)\| \|u_k\| +\| H(\theta_k) - H(\theta^*)\| \|u_k\|+ \|(\Lambda_k - \Pi(\theta^*))u_k\|}{\|u_k\|} = 0. 
\end{aligned}
\ee
The result \eqref{stochatic-dennis-more} is actually the stochastic Dennis-M\"ore condition. 

\textbf{(2.)} The next step is to show that superlinear convergence results are guaranteed if \eqref{stochatic-dennis-more} holds.
For simplicity of notations, we set
    \begin{eqnarray*}
        w_1^k &=& (B_k - \nabla^2 \Psi(\theta^*))(\theta^{k+1}-\theta^k), \\
        w_2^k &=& \nabla \Psi(\theta^{k+1}) - \nabla \Psi(\theta^k) - \nabla^2\Psi(\theta^*)(\theta^{k+1}-\theta^k).
    \end{eqnarray*}
    Then by \eqref{iteration-local-supp}, we have
    \bee
    \begin{aligned}
       & B_k(\theta^{k+1} - \theta^{k})  - \nabla^2\Psi(\theta^*)(\theta^{k+1}-\theta^k) =  -\nabla \Psi(\theta^k) - \nabla^2\Psi(\theta^*)(\theta^{k+1}-\theta^k).
        \end{aligned}
    \eee
    It follows that
    \[
        w_1^k - w_2^k  =
        -\nabla \Psi(\theta^{k+1}).
    \]
    Due to Assumptions 1-2, we have that $\|w_1^k\|/\|\theta^{k+1}-\theta^k\|$ and $\|w_2^k\|/\|\theta^{k+1}-\theta^k\|$ converges to $0$ almost surely.
    It follows that
    \be
    \label{mkconv0}
     m_k := \frac{\|-\nabla \Psi(\theta^{k+1})\|}{\|\theta^{k+1}-\theta^k\|} \rightarrow 0 \text{ almost surely}.
    \ee
    By the nonsingularity of $\nabla^2 \Psi(x^*)$ and the convergence of
    $\{\theta^k\}$, with probability $1$, there exists a constant $\xi$ such that
    \[
        \|\nabla \Psi(\theta^{k+1})\| \geq \xi\|\theta^{k+1}-\theta^*\|.
    \]
    It implies that
    \bee
    \begin{aligned}
        m_k       &  \geq \frac{\xi\|\theta^{k+1}-\theta^*\|}{\|\theta^{k+1}-\theta^*\| + \|\theta^{k}-\theta^*\|}.
      \end{aligned}
    \eee
 Hence, it follows that  $$\frac{\|\theta^{k+1}-\theta^*\|}{\|\theta^{k}-\theta^*\|}\leq\frac{m_k}{\xi - m_k} \rightarrow 0.$$
 This finishes the proof.
\end{proof}
%\bibliographystyle{ieee_fullname}
%\bibliography{ref}

%\author{PaperID: 7829 }
%\date{}
%
%\author{%
%    Minghan Yang\textsuperscript{1,2}, ~~%
%    Dong Xu \textsuperscript{1,2}, ~~%
%    Hongyu Chen \textsuperscript{1}, ~~
%    Zaiwen Wen  \textsuperscript{2,3}, ~~
%    Mengyun Chen \textsuperscript{4}
%    \\
%    % Institutes
%     {\textsuperscript{1} School of Mathematical Sciences, Peking University, China
%} ~\\
%    {\textsuperscript{2} Beijing International Center for Mathematical Research, Peking University, China} ~\\
% {\textsuperscript{3} Center for Data Science, Peking University, China} ~\\
%% {\textsuperscript{4} National Engineering Laboratory for Big Data Analysis and Applications, Peking University} ~\\
% {\textsuperscript{4} Huawei Technologies Co. Ltd, China
%} ~
%  \\
% % Emails
% {\tt\small \{yangminghan, taroxd, hongyuchen, wenzw\}@pku.edu.cn ,chenmengyun1@huawei.com} \\
%}
%\maketitle

\end{document}